\crefname{equation}{}{}
\let\originalleft\left
\let\originalright\right
\renewcommand{\left}{\mathopen{}\mathclose\bgroup\originalleft}
\renewcommand{\right}{\aftergroup\egroup\originalright}
\newcommand*{\claimproofname}{Proof of claim}
\newenvironment{claimproof}[1][\claimproofname]{\begin{proof}[#1]}{\end{proof}}
\crefname{algocf}{Algorithm}{Algorithms}
\crefname{equation}{}{} 
\colorlet{refkey}{orange!20}
\colorlet{labelkey}{blue!30}
\crefname{algocf}{Algorithm}{Algorithms}
\numberwithin{equation}{section}
\newtheorem{theorem}{Theorem}[section]
\newtheorem{proposition}[theorem]{Proposition}
\newtheorem{lemma}[theorem]{Lemma}
\crefname{claim}{Claim}{Claims}
\crefname{theorem}{Theorem}{Theorems}
\newtheorem{corollary}[theorem]{Corollary}
\newtheorem*{claim*}{Claim}
\crefname{conjecture}{Conjecture}{Conjectures}
\newtheorem{definition}[theorem]{Definition}
\newtheorem*{definition*}{Definition}
\theoremstyle{definition}
\theoremstyle{remark}
\newtheorem{remark}[theorem]{Remark}
\newcommand{\floor}[1]{\left\lfloor #1 \right\rfloor}
\newcommand{\one}{\mathbbm{1}}
\newcommand{\mb}{\mathbb}
\newcommand{\mbm}{\mathbbm}
\newcommand{\mc}{\mathcal}
\newcommand{\mr}{\mathrm}
\newcommand{\on}{\operatorname}
\newcommand{\wh}{\widehat}
\newcommand{\pvec}[1]{\vec{#1}\mkern2mu\vphantom{#1}'}
\newcommand{\eps}{\varepsilon}
\renewcommand{\Pr}{\mb P}
\title{The edge-statistics conjecture for hypergraphs}
\author[Jain]{Vishesh Jain}
\address{Department of Mathematics, Statistics, and Computer Science, University of Illinois Chicago, Chicago, IL, 60607 USA}
\email{visheshj@uic.edu}
\author[Kwan]{Matthew Kwan}
\address{Institute of Science and Technology Austria (ISTA). Am Campus 1, 3400 Klosterneuburg, Austria}
\email{matthew.kwan@ist.ac.at}
\author[Mubayi]{Dhruv Mubayi}
\address{Department of Mathematics, Statistics, and Computer Science, University of Illinois Chicago, Chicago, IL, 60607 USA}
\email{mubayi@uic.edu}
\author[Tran]{Tuan Tran}
\address{School of Mathematical Sciences, University of Science and Technology of China, Hefei, 230026 Anhui, China}
\email{trantuan@ustc.edu.cn}
\thanks{
Jain was supported by NSF CAREER award DMS-2237646.
Kwan was supported by ERC Starting Grant ``RANDSTRUCT'' No.~101076777. Mubayi was supported by NSF grant DMS-2153576. Tran was supported by the National Key Research and Development Program of China 2023YFA101020.
}
\begin{document}

\maketitle
\begin{abstract}
Let $r,k,\ell$ be integers such that $0\le\ell\le\binom{k}{r}$. Given a large $r$-uniform hypergraph $G$, we consider
the fraction of $k$-vertex subsets which span exactly $\ell$ edges.
If $\ell$ is $0$ or $\binom{k}{r}$, this fraction can be exactly
$1$ (by taking $G$ to be empty or complete), but for all other values
of $\ell$, one might suspect 
that this fraction is always significantly
smaller than $1$.

In this paper we prove an essentially optimal result along these lines:
if $\ell$ is not $0$ or $\binom{k}{r}$, then this fraction
is at most $(1/e) + \eps$, assuming $k$ is sufficiently large
in terms of $r$ and $\varepsilon>0$, and $G$ is sufficiently large
in terms of $k$. Previously, this was only known for a very limited range of values of $r,k,\ell$ (due to Kwan--Sudakov--Tran, Fox--Sauermann, and Martinsson--Mousset--Noever--Truji\'{c}).
Our result answers a question of Alon--Hefetz--Krivelevich--Tyomkyn, who suggested this as a hypergraph generalisation of their \emph{edge-statistics conjecture}. We also prove
a much stronger bound when $\ell$ is far from 0 and $\binom{k}{r}$.
\end{abstract}
\section{Introduction}
Given a $k$-vertex graph $H$, what is the maximum possible number
of $k$-vertex subsets of an $n$-vertex graph that induce a copy
of $H$? Denote this number by $N(n,H)$, so we have $0\le N(n,H)\le\binom{n}{k}$.
A simple averaging argument shows that $N(n,H)/\binom{n}{k}$ is nonincreasing 
in $n$ (for $n\ge k$), so we can define
\begin{equation}
    \operatorname{ind}(H)=\lim_{n\to\infty}\frac{N(n,H)}{\binom{n}{k}}.\label{eq:ind(H)}
\end{equation}
This quantity is called the \emph{inducibility} or the \emph{maximum
induced density} of $H$. It was first considered in 1975 by Pippenger
and Golumbic~\cite{PG75}, and has been studied intensively in the intervening
decades.

In general, it is very difficult to determine $\operatorname{ind}(H)$,
even for small graphs $H$ (for example, the inducibility of the 4-vertex
path is still unknown). However, we do know the \emph{minimum} of
$\operatorname{ind}(H)$, among all $k$-vertex graphs $H$ (provided
$k$ is sufficiently large): indeed, Pippenger and Golumbic~\cite{PG75} showed
that if $H$ has $k$ vertices then 
\begin{equation}
\operatorname{ind}(H)\ge\frac{k!}{k^{k}-k},\label{eq:PG}
\end{equation}
and it is known (see \cite{BHLF, FSW21}) that if $k=5$ or if $k$ is sufficiently large, there is a choice of $H$ that attains this bound.

What about the \emph{maximum} value of $\operatorname{ind}(H)$? It
is easy to see that $\operatorname{ind}(H)=1$ when $H$ is a complete
or empty graph; if we exclude these ``trivial'' examples then
we arrive at the so-called \emph{large inducibility conjecture} of
Alon, Hefetz, Krivelevich and Tyomkyn~\cite[Conjecture 1.2]{AHKT20}. Namely, they identified
several (nontrivial) infinite classes of graphs $H$ with $\operatorname{ind}(H)>1/e$,
and conjectured that the maximum of $\operatorname{ind}(H)$ over
all nontrivial $k$-vertex graphs $H$ tends to $1/e$ as $k\to\infty$.

Alon, Hefetz, Krivelevich and Tyomkyn also made a second much stronger
conjecture called the \emph{edge-statistics
conjecture}~\cite[Conjecture 1.1]{AHKT20}, concerning
a much looser variant of graph inducibility. Specifically, for $0\le\ell\le\binom{k}{2}$,
let $N_2(n,k,\ell)$ be the maximum possible number of $k$-vertex subsets
of an $n$-vertex graph which induce exactly $\ell$ edges, and let
\begin{equation}
    \operatorname{ind}_2(k,\ell)=\lim_{n\to\infty}\frac{N_2(n,k,\ell)}{\binom{n}{k}}.\label{eq:ind(k ell)}
\end{equation}
For each value of $k$, say that $0$ and $\binom{k}{2}$ are the ``trivial'' values of $\ell$; the edge-statistics conjecture says that the maximum of $\operatorname{ind}_2(k,\ell)$ over all nontrivial $\ell$ tends to $1/e$ as $k\to\infty$.  
This conjecture has significance
beyond its consequences for graph inducibility: it can be interpreted
as giving a limit on ``how uniform'' a graph can be, with respect
to statistics of edges in small subsets.

It is equally natural to consider the large inducibility and edge-statistics conjectures for \emph{hypergraphs}; these generalisations were actually explicitly suggested in the same paper of Alon, Hefetz, Krivelevich, and Tyomkyn, though they wrote ``needless to say that we expect these questions to be difficult''. To be precise, for an $r$-uniform hypergraph $H$ and for $0\le \ell\le \binom kr$, we define $N(n,H)$ and $N_r(k,\ell)$ to be the maximum possible numbers of $k$-vertex subsets in an $n$-vertex $r$-uniform hypergraph which induce a copy of $H$ and which induce exactly $\ell$ edges, respectively. Then we can define $\operatorname{ind}(H)$ and $\operatorname{ind}_r(k,\ell)$ as in \cref{eq:ind(H),eq:ind(k ell)}, and use these notions to generalise the large inducibility and edge-statistics conjectures in the obvious ways: namely, the maximum value of $\operatorname{ind}(H)$, over all $k$-vertex $r$-uniform hypergraphs which are neither empty or complete, and the maximum value of $\operatorname{ind}_r(k,\ell)$ over all $0<\ell<\binom kr$, both tend to $1/e$ as $k\to \infty$ (holding $r$ fixed).

In a combination
of papers by Kwan, Sudakov and Tran~\cite{KST19}, Fox and Sauermann~\cite{FS20},
and Martinsson, Mousset, Noever and Truji\'c~\cite{MMNT19}, the edge-statistics
conjecture for graphs (i.e.~$r=2)$, and therefore the large inducibility conjecture for graphs, have
been resolved. These papers also provide evidence for the hypergraph edge-statistics conjecture, establishing it in the special cases where $\ell = o(k)$ and where $r = 3$ and
$\ell = \Omega(k^3)$. Our first main theorem completely resolves the hypergraph edge statistics conjecture, and therefore the hypergraph large inducibility conjecture. 

\begin{theorem}
\label{conj:1/e}Fix any $r\in\mb N$ and $\varepsilon>0$. Suppose $k$ is sufficiently
large in terms of $r,\varepsilon$. If $\ell\notin\{0,\binom{k}{r}\}$, then
\[
\operatorname{ind}_{r}(k,\ell)\le\frac{1}{e}+\varepsilon.
\]
Consequently, for any $k$-vertex $r$-uniform hypergraph $H$ that is neither empty nor complete, 
\[
\operatorname{ind}(H)\le\frac{1}{e}+\varepsilon.
\]
\end{theorem}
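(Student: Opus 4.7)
The plan is to use the ``switching'' identity central to the graph-case proof by Martinsson--Mousset--Noever--Truji\'c: for a large extremal $n$-vertex $r$-uniform hypergraph $G$, if $T$ is a uniform random $(k+1)$-subset of $V(G)$ and $v$ is a uniform random element of $T$, then $T \setminus \{v\}$ is distributed as a uniform random $k$-subset, and $e(T \setminus \{v\}) = e(T) - d_T(v)$, where $d_T(v)$ denotes the number of edges of $G[T]$ containing $v$. This converts the problem into showing
\[
\mb E_T\!\left[\frac{\#\{v \in T : d_T(v) = e(T) - \ell\}}{k+1}\right] \le \frac{1}{e} + \varepsilon,
\]
i.e.\ into controlling the expected frequency of the ``target value'' $e(T)-\ell$ in the degree multiset $(d_T(v))_{v \in T}$ of a typical random $(k+1)$-subset. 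The constant $1/e$ emerges as the mode of $\operatorname{Poisson}(1)$, realised by the extremal construction of a random hypergraph of density $1/\binom{k}{r}$.

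I would split into regimes based on $\ell$, using complementation ($G \leftrightarrow \overline{G}$, $\ell \leftrightarrow \binom{k}{r}-\ell$) to assume $\ell \le \binom{k}{r}/2$. In the \emph{bulk} regime where $\ell$ is at least some slowly growing function of $k$, the plan is a hypergraph Hal\'asz/Littlewood--Offord-type anti-concentration argument: after a cleaning step that removes vertices whose links have degenerate structure, one exhibits many vertices of $G[T]$ whose contributions to $d_T(\cdot)$ are ``spread out'', forcing $e(S)$ to have standard deviation much larger than $1$ and hence a maximum point-probability $o(1)$, comfortably below $1/e$. I would attempt an induction on $r$, with the resolved $r=2$ case as the base, exploiting that the link of any vertex is an $(r-1)$-uniform hypergraph whose own edge-statistics can be leveraged.

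The main obstacle, and where the bound $1/e$ is tight, is the \emph{sparse} regime $\ell = O(1)$, and in particular $\ell = 1$. Here $e(T)$ is typically small (of order $1$), the degree multiset is dominated by zeros, and the target value $e(T) - \ell$ is also small. The approach has to be a quantitative stability argument: if the displayed expectation exceeds $1/e + \varepsilon$, then $G$ must be close in a precise sense to the sparse Poisson-random extremal construction, and the contribution of every $T$ to the expectation must be individually close to the corresponding Poisson probability. The hard part will be proving such a quantitative Poisson limit theorem uniformly over arbitrary $G$: one has to contend with localised dense subhypergraphs, which can distort the degree distribution inside $T$ in complicated ways. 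Showing that such substructures can be peeled off without affecting the edge statistics---likely via supersaturation-style lemmas adapted to $r$-uniform hypergraphs---and then exploiting the sharp inequality $\max_{\mu \ge 0} \mu^\ell e^{-\mu}/\ell! \le 1/e$ (with equality only at $\ell \in \{0,1\}$ and $\mu=1$), is the step I expect to require genuinely new ideas beyond those sufficient for the $r=2$ case.
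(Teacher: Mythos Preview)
Your proposal is an honest research plan rather than a proof: you yourself identify the sparse regime $\ell=O(1)$ as requiring ``genuinely new ideas'', and indeed the quantitative Poisson-stability argument you sketch---showing that any $G$ achieving $>1/e+\varepsilon$ must be close to the random extremal construction, then peeling off dense localised pieces---is not carried out and is precisely the step that resisted attack for several years. The induction-on-$r$ plan for the bulk is also left vague; it is not clear how to use the $(r-1)$-uniform link hypergraphs to get anticoncentration for $e(G[U])$ itself, since the links of different vertices are heavily correlated.

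The paper's proof takes a route that sidesteps the stability argument entirely, and does \emph{not} use the MMNT switching identity. The bulk regime (Theorem~\ref{conj:dense}) is handled by a direct polynomial Littlewood--Offord bound on the slice, via a coupling of $\operatorname{Slice}(n,k)$ with Rademacher variables together with a Bollob\'as--Scott discrepancy lemma. For the sparse regime, the key structural lemma (Lemma~\ref{lem:vertex-cover}) produces a bounded-size vertex set $Y$ such that, after conditioning on $Y\cap U$, either the residual edge-count polynomial has a large matching in its top-degree support (so polynomial Littlewood--Offord gives $o(1)$), or it has so many nonzero top-degree terms that a variance/Chebyshev argument works. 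The $1/e$ constant then comes from applying a Poisson-type anticoncentration inequality (Theorem~\ref{thm:Poisson-anticoncentration}) not to $e(G[U])$ directly but to the conditional expectation $\mb E[e(G[U])\mid Y\cap U]$, which is a nonnegative-coefficient polynomial in only $|Y|=O_{r,m}(1)$ slice variables; Ehm's total-variation bound transfers this from a product measure to the slice. This two-stage conditioning---first the small ``Poisson'' part inside $Y$, then the ``Littlewood--Offord'' part outside---is what replaces the stability argument you anticipated needing.
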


Alon, Hefetz, Krivelevich and Tyomkyn were also interested in the value of $\operatorname{ind}_2(k,\ell)$ when $\ell$ is \emph{far} from $0$ and $\binom k2$. They made several conjectures in this direction~\cite[Conjecture~6.1 and~6.2]{AHKT20}, which have since been resolved by Kwan, Sudakov and Tran~\cite{KST19} and Kwan and Sauermann~\cite{KS}. We also prove a theorem along these lines for hypergraphs.
\begin{theorem}\label{conj:dense}
Fix any $r\in\mb N$ and $\varepsilon>0$.
If $\alpha\binom{k}{r}\le\ell\le(1-\alpha)\binom{k}{r}$ for
some $\alpha\in(0,1/2]$,  and if $\alpha k$ is sufficiently large
in terms of $r,\varepsilon$, then
\[
\operatorname{ind}_{r}(k,\ell)\le\frac{1}{(\alpha k)^{1/2-\varepsilon}}.
\]
\end{theorem}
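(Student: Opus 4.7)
The plan is to prove an anticoncentration bound for the random variable $e(G[S])$, where $S$ is a uniformly random $k$-subset of an $n$-vertex $r$-uniform hypergraph $G$ with $n$ large, in the spirit of the Littlewood--Offord-type arguments used in the graph case ($r=2$) by Kwan--Sauermann. By the symmetry $\operatorname{ind}_r(k,\ell)=\operatorname{ind}_r(k,\binom{k}{r}-\ell)$, we may assume $\alpha\binom{k}{r}\le\ell\le\tfrac{1}{2}\binom{k}{r}$; a first-moment calculation then shows that if $\Pr[e(G[S])=\ell]$ is non-negligible, then (a large induced sub-hypergraph of) $G$ must have edge density $\Theta(\alpha)$.

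The central step is a one-vertex exposure. Sample $S$ in two stages: first a uniformly random $(k-1)$-subset $S_0\subset V(G)$, then a uniformly random vertex $v\in V(G)\setminus S_0$. Writing $d(v;S_0):=|\{e\in E(G):v\in e,\ e\setminus\{v\}\subset S_0\}|$ for the link degree of $v$ into $S_0$, we have $e(G[S])=e(G[S_0])+d(v;S_0)$, so
\[
\Pr[e(G[S])=\ell]\;\le\;\mathbb{E}_{S_0}\!\left[\max_{t\in\mathbb{Z}}\Pr_{v}\!\left(d(v;S_0)=t\right)\right].
\]
It thus suffices to show that for typical $(k-1)$-subsets $S_0$, the empirical distribution of the link-degree function $v\mapsto d(v;S_0)$ over $v\notin S_0$ has maximum point-mass at most $(\alpha k)^{-1/2+\varepsilon}$.

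To prove this I would induct on $r$, with the base case $r=2$ supplied directly by Kwan--Sauermann. For each vertex $w\in V(G)$ let $G_w$ denote the $(r-1)$-uniform link hypergraph $\{f\subset V(G)\setminus\{w\}:f\cup\{w\}\in E(G)\}$, and observe that $d(u;S_0)-d(v;S_0)=e(G_u[S_0])-e(G_v[S_0])$. Failure of the point-mass bound would produce, via Cauchy--Schwarz, too many pairs $(u,v)$ with $e(G_u[S_0])=e(G_v[S_0])$; conditioning on $e(G_v[S_0])$ and applying the inductive hypothesis at rank $r-1$ to $G_u$ at the ``target'' value $e(G_v[S_0])$ then yields a contradiction, provided each link $G_u$ lies in the nontrivial density range.

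The principal obstacle I anticipate is ensuring that enough of the link hypergraphs $G_u$ (and their pairwise symmetric differences $G_u\triangle G_v$) lie in the nontrivial density regime $[\alpha',1-\alpha']\binom{k-1}{r-1}$ where the inductive hypothesis gives a useful anticoncentration bound, and that conditioning on $e(G_v[S_0])$ does not destroy the anticoncentration of $e(G_u[S_0])$. This will require a preliminary cleaning step restricting $G$ to a large sub-hypergraph on which every link has simultaneously regular density, together with careful management of the $\varepsilon$-slack so that the accumulated losses from the exposure, the Cauchy--Schwarz, the cleaning, and the inductive bound collectively fit under the claimed $(\alpha k)^{-1/2+\varepsilon}$.
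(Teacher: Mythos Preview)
Your approach has a genuine structural gap: the one-vertex exposure is simply too weak to yield an $(\alpha k)^{-1/2+\varepsilon}$ bound. Consider the paper's own tightness example, the $r$-uniform hypergraph $G$ on $[n]$ with a distinguished set $S$ of $\alpha n$ vertices, whose edges are exactly the $r$-sets meeting $S$ in a single vertex. For any $(k-1)$-set $S_0$, the link degree $d(v;S_0)$ depends only on whether $v\in S$ or $v\notin S$: it equals $\binom{|S_0\setminus S|}{r-1}$ in the first case and $|S_0\cap S|\binom{|S_0\setminus S|}{r-2}$ in the second. Hence $d(\,\cdot\,;S_0)$ takes only two values, so $\max_t \Pr_v(d(v;S_0)=t)\ge 1-\alpha\ge 1/2$ for every $S_0$, and your displayed inequality gives nothing better than $1/2$. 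Yet for this $G$ the point probability $\Pr[e(G[U])=\ell]$ really is $\Theta((\alpha k)^{-1/2})$. The Cauchy--Schwarz/conditioning step does not help here either: for $u,v$ both in $S$ (or both outside $S$) the links $G_u$ and $G_v$ are \emph{identical}, so $e(G_u[S_0])=e(G_v[S_0])$ with probability $1$, the symmetric differences $G_u\triangle G_v$ are empty, and no cleaning can put them into a nontrivial density range. More generally, the inductive hypothesis concerns anticoncentration of $e(H[S_0])$ for $S_0$ uniform on the full slice, and says nothing after conditioning on the highly non-product event $\{e(G_v[S_0])=t\}$; you flag this yourself, but it is not a technicality to be managed---in examples like the one above it fails outright.

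The paper proceeds quite differently, and the difference is essential. Rather than exposing a single vertex, it couples the entire slice $\operatorname{Slice}(n,k)$ (with $n=2k$) to $k$ independent Rademacher variables at once: choose $k$ disjoint pairs $\{v_i(-1),v_i(1)\}$ and set $U=\{v_i(\xi_i)\}$. Then $e(G[U])$ becomes a degree-$r$ multilinear polynomial in $\xi_1,\dots,\xi_k$, and the problem is reduced to the polynomial Littlewood--Offord problem for Rademacher inputs, where the Meka--Nguyen--Vu bound gives $(\log m)^{O_r(1)}/\sqrt{m}$ once one has a matching of size $m$ among large coefficients at some degree $s\le r$. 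The existence of such a matching (with $m\gtrsim_r\beta k$, where $\beta\gtrsim\alpha$ is the density of $G$) is guaranteed by a discrepancy-type lemma of Bollob\'as and Scott, which is the other key external ingredient. There is no induction on $r$, and crucially the Rademacher coupling handles signed combinations of edge indicators, which is exactly what your link-difference $e(G_u[S_0])-e(G_v[S_0])$ would require but the inductive hypothesis on $\operatorname{ind}_{r-1}$ cannot provide.
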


We note that Kwan, Sudakov and Tran~\cite[Theorem~1.3]{KST19} previously proved the $r=3$ case of \cref{conj:dense} in the ``extremely dense'' regime where $\alpha$ is a constant which may not vary with $k$ (using the induced hypergraph removal lemma).

\cref{conj:1/e,conj:dense} are both essentially optimal, as follows. Write $N(G,k,\ell)$ for the number of $k$-vertex subsets with $\ell$ edges, in an $r$-uniform hypergraph $G$.
\begin{itemize}
\item It was observed by Fox and Sauermann~\cite{FS20} that for every $r$
the constant ``$1/e$'' in \cref{conj:1/e} cannot be improved.
To briefly explain why: for any $1\le s\le r$, let $F$ be a random
$s$-uniform hypergraph on $n$ vertices, in which every possible
edge is present with probability $1/\binom{k}{s}$ independently.
Then, let $G$ be the $r$-uniform hypergraph on the same vertex set 
whose edges are the $r$-sets which are supersets of some edge of $F$. For $\ell=\binom{k-s}{r-s}$,
it is easy to establish the convergence in probability
\begin{equation}
\frac{N(G,k,\ell)}{\binom{n}{k}}\overset{\Pr}{\to}\left(1-\frac{1}{\binom{k}{s}}\right)^{\binom{k}{s}-1}>\frac{1}{e}\label{eq:poisson-computation}
\end{equation}
as $n\to\infty$, which implies that $\operatorname{ind}_{r}(k,\ell)>1/e$.
\item It is easy to see that the exponent ``$1/2$'' in \cref{conj:dense}
cannot be improved. Indeed, consider any $\alpha\in(0,1/2)$ and $k\in\mb N$
such that $\alpha k$ is an integer, and consider any $n$ divisible
by $k$. Let $G$ be an $n$-vertex $r$-uniform hypergraph with a distinguished
set $S$ of $\alpha n$ vertices, whose edges are the $r$-sets that
intersect $S$ in exactly one vertex. Then, with $\ell=\alpha k\binom{k-\alpha k}{r-1}$
(so $\ell$ has order of magnitude $\alpha\binom{k}{r}$), one can
compute
\begin{equation}
\lim_{n\to\infty}\frac{N(G,k,\ell)}{\binom{n}{k}}=\binom{k}{\alpha k}\alpha^{\alpha k}(1-\alpha)^{(1-\alpha)k}\ge\frac{c}{(\alpha k)^{1/2}}\label{eq:gaussian-computation}
\end{equation}
for some absolute constant $c>0$, which implies that $\operatorname{ind}_{r}(k,\ell)>c(\alpha k)^{-1/2}$.
\end{itemize}
The estimates in \cref{eq:poisson-computation} and \cref{eq:gaussian-computation}
can be confirmed via direct computation, but from a more conceptual
point of view, they can also be intuitively understood in terms of
two different approximations for the binomial distribution, which
lead to two different \emph{anticoncentration} bounds (i.e., upper
bounds on point probabilities). On the one hand, a binomial distribution
with a low success probability can be closely approximated by a \emph{Poisson}
distribution. The bound in \cref{eq:poisson-computation} is related
to the fact that a Poisson random variable with parameter 1 is equal
to 1 with probability $1/e$, and this is the maximum possible probability
for any Poisson random variable to take any particular nonzero value.
On the other hand, a binomial distribution with $k$ trials and success
probability $\alpha\in(0,1/2)$ can be approximated by a \emph{Gaussian}
distribution with standard deviation about $\sqrt{\alpha k}$. The
point probabilities of such a binomial distribution are at most about
$1/\sqrt{\alpha k}$; this corresponds to the fact that the corresponding
Gaussian distribution has density at most about $1/\sqrt{\alpha k}$.

The proofs of \cref{conj:1/e,conj:dense} both crucially depend
on anticoncentration inequalities that vastly generalise the above
two observations about binomial distributions. Specifically, we need
two general anticoncentration inequalities for low-degree polynomials
of independent random variables: (a strengthened form of) a ``Poisson-type''
anticoncentration inequality due to Fox, Kwan and Sauermann~\cite{FKS21},
and bounds on the so-called \emph{polynomial Littlewood--Offord problem}
due to Meka, Nguyen and Vu~\cite{MNV16}.

Recall that $\on{ind}_r(k,\ell)$ can be defined in terms of quantities of the form $N(G,k,\ell)$, which can be understood as the probability that a random $k$-vertex subset of $G$ has exactly $\ell$ edges. It is not hard to interpret the number of edges in a random $k$-vertex subset of $G$ as a polynomial evaluated at a random vector (namely, at a random point on a ``slice of the Boolean hypercube''). 
Unfortunately, since the entries of this random vector are not independent, one cannot directly apply the aforementioned polynomial anticoncentration inequalities (in fact, it is easy to see that the conclusions of these inequalities are in general false on a slice of the Boolean hypercube).
The key new contributions in this paper are several different ways to ``transfer'' polynomial anticoncentration inequalities to slices of the Boolean hypercube. For the proof of \cref{conj:dense}, we generalise a coupling lemma due to Kwan, Sudakov and Tran~\cite{KST19}, and combine it with a result of Bollob\'as and Scott~\cite{BS15} related to discrepancy of hypergraphs. For the proof of \cref{conj:1/e}, we additionally use a classical estimate on hypergeometric distributions due to Ehm~\cite{Ehm91} which allows us to transfer Poisson-type anticoncentration inequalities to functions of the slice which only depend on a few vertices. Then, we prove a delicate combinatorial lemma which says that every hypergraph has a small vertex subset $Y$ which ``only sees large matchings''. We use our Poisson-type anticoncentration inequality to understand ``what happens inside $Y$'', and then after conditioning on this information we apply polynomial Littlewood--Offord bounds (which are effective for hypergraphs with large matchings).

\subsection{Further directions}

There is nearly unlimited potential to ask more precise questions
about the quantities $\operatorname{ind}_{r}(k,\ell)$. Most obviously,
there is the question of removing the ``$\varepsilon$'' in the
exponent of \cref{conj:dense}: in the setting of \cref{conj:dense}
we conjecture that 
\begin{equation}
\operatorname{ind}_{r}(k,\ell)\le\frac{C_{r}}{(\alpha k)^{1/2}}\label{eq:stronger-dense}
\end{equation}
for some constant $C_{r}$ depending only on $r$ (this also appears
as \cite[Conjecture~5]{KST19}). In the case $r=2$, this bound was recently
proved by Kwan and Sauermann~\cite{KS},
via new
progress on the so-called \emph{quadratic Littlewood--Offord problem}.
Actually, our proof of \cref{conj:dense} reduces the general-$r$
case of \cref{eq:stronger-dense} to a well-known conjecture in Littlewood--Offord
theory. We introduce the polynomial Littlewood--Offord problem properly,
and discuss these aspects further, in \cref{sec:poly-LO}.

One could also ask about the ``$\varepsilon$'' in \cref{conj:1/e}:
for each $r,k$, what is the exact maximum value of $\operatorname{ind}_{r}(k,\ell)$,
among all $\ell\notin\{0,\binom{k}{r}\}$? We wonder if the maximum is always attained at $\ell=1$. We remark that the exact value of $\operatorname{ind}_{2}(k,1)$, for all $k$, was recently found by Liu, Mubayi and Reiher~\cite[Theorem~1.13]{LMR23} (see also \cite{Hir14} for earlier work in the case $k=4$).

It is also interesting to consider the maximum possible value
of $\operatorname{ind}_{r}(k,\ell)$ among all \emph{pairs} $(k,\ell)$
satisfying $\ell\notin\{0,\binom{k}{r}\}$. In the case $r=2$, it
was suggested by Alon, Hefetz, Krivelevich and Tyomkyn~\cite{AHKT20} that this
maximum value might be $\on{ind}_2(3,1)=3/4$.

Finally, it would be very interesting to investigate ``stability'' in the settings of \cref{conj:1/e,conj:dense}. Although the constant ``$1/e$'' in \cref{conj:1/e} is best-possible in general, we conjecture that it can be improved when $\min(\ell,\binom{k}{r}-\ell)$
is not of the form $\binom{k-s}{r-s}$ (a related theorem was very recently proved by Ueltzen~\cite{Uel} in the setting of graph inducibility). Similarly, although the exponent ``$1/2$'' in \cref{conj:dense} is best-possible in general, we conjecture that there is $\delta_r>0$ such that $\on{ind}_r(k,\ell)\le k^{-1/2-\delta_r}$ for ``generic'' $\ell$ (i.e., for a $1-o(1)$ fraction
of $\ell$ in the range $0\le \ell\le \binom kr$, where asymptotics are as $k\to \infty$, holding $r$ fixed). This seems to be related to a conjecture of Costello~\cite[Conjecture~3]{Cos13} on ``stability'' for the polynomial Littlewood--Offord problem.

\subsection{Notation}We use standard graph theory notation throughout. For a hypergraph $G$, we write $e(G)$ for the number of edges in $G$, and for a vertex subset $U$, we write $G[U]$ to denote the subgraph of $G$ induced by $U$.

We also use asymptotic notation throughout. For functions $f=f(n)$ and $g=g(n)$, we write $f=O(g)$ or $f\lesssim g$ to mean that there is a constant $C$ such that $|f|\le C|g|$, $f=\Omega(g)$ or $f\gtrsim g$ to mean that there is a constant $c>0$ such that $f(n)\ge c|g(n)|$ for sufficiently large $n$, and $f=o(g)$ to mean that $f/g\to0$ as $n\to\infty$. Subscripts on asymptotic notation indicate quantities that should be treated as constants.

For parameters $\alpha, \beta_1,\dots,\beta_q$, we write $\alpha \ll \beta_1,\dots,\beta_q$ to mean ``$\alpha$ is sufficiently small in terms of $\beta_1,\dots,\beta_q$'' (i.e., it is shorthand for a statement of the form ``$\alpha\le f(\beta_1,\dots,\beta_q)$'', for some function $f$ which we do not wish to specify explicitly). Similarly, we write $\alpha \gg \beta_1,\dots,\beta_q$ to mean ``$\alpha$ is sufficiently large in terms of $\beta_1,\dots,\beta_q$''.

For a positive integer $n$ and an integer $0 \leq d \leq n$, we write $[n]=\{1,\dots,n\}$ and denote the set of all size-$d$ subsets of $[n]$ by $\binom{[n]}{d}$. For a real number $x$, the floor and ceiling functions are denoted $\lfloor x\rfloor=\max(i\in \mb Z:i\le x)$ and $\lceil x\rceil =\min(i\in\mb Z:i\ge x)$. We will however sometimes omit floor and ceiling symbols and assume large numbers are integers, when divisibility considerations are not important. All logarithms in this paper without an explicit base are to base $e$, and the natural numbers $\mb N$ do not include zero. 

For a vector $\vec{x}\in \mb R^n$, we write $x_1,\dots,x_n$ for its coordinates, and for $W\subseteq [n]$, we write $\vec x^{W}$ to denote the monomial $\prod_{i \in W}x_i$. For a multilinear polynomial $P \in \mb{R}[x_1,\dots, x_n]$, we denote the coefficient of the monomial $\vec x^{W}$ by $\wh{P}(W)$.   

\subsection{Organisation of the paper}
In \cref{sec:poly-LO}, we introduce the polynomial Littlewood--Offord problem (on anticoncentration of polynomials of independent random variables) and describe the best known bounds for this problem. These results will play a crucial role in the proofs of both \cref{conj:1/e,conj:dense}, but in order to actually apply them we need a general coupling lemma for polynomials ``on a slice of the Boolean hypercube'', which we present in \cref{sec:slice-coupling}. In \cref{sec:dense} we prove \cref{conj:dense}, using the above tools and a result of Bollob\'as and Scott~\cite{BS15}.

Then, in the rest of the paper, we focus on \cref{conj:1/e}. First, in \cref{sec:sparse-LO} we apply the above tools in a different way, to prove an anticoncentration bound for ``sparse'' polynomials with ``large matchings''. In \cref{sec:poisson-LO}, we state and prove our Poisson-type anticoncentration inequality, and in \cref{sec:TV-comparison} we show how to use an estimate of Ehm~\cite{Ehm91} to compare certain functions ``on a slice of the Boolean hypercube'' with corresponding functions of product distributions. Then, in \cref{sec:vertex-cover} we prove a lemma showing that every hypergraph has a small set of vertices which ``only sees large matchings''. Finally, after some technical variance estimates in \cref{sec:variance}, we prove \cref{conj:1/e} in \cref{sec:completing}.

\subsection*{Acknowledgments}We would like to thank Lisa Sauermann for helpful comments. We would also like to thank Alex Grebennikov for identifying an oversight in the application of \cref{thm:slice-vs-product} (in a previous version of this paper).

\section{The polynomial Littlewood--Offord problem}\label{sec:poly-LO}
In this section we introduce the \emph{polynomial
Littlewood--Offord problem}, concerning \emph{anticoncentration} of
polynomials of independent random variables. More specifically, let $P\in\mb R[x_{1},\dots,x_{k}]$ be a $k$-variable polynomial and let $\xi_{1},\dots,\xi_{k}$ be i.i.d.\ Rademacher random variables 
(i.e., $\Pr[\xi_{i}=-1]=\Pr[\xi_{i}=1]=1/2$).
What upper bounds can be proved on the maximum point probability
\[
\sup_{\ell\in\mb R}\Pr[P(\xi_{1},\dots,\xi_{k})=\ell],
\]
in terms of simple combinatorial information about the polynomial
$P$? The most well-known theorem in this direction is due to Erd\H os~\cite{Erd45}:
improving a theorem of Littlewood and Offord~\cite{LO43}, Erd\H os proved
that if $P$ is a linear form with at least $m$ nonzero coefficients,
then
\[
\Pr[P(\xi_{1},\dots,\xi_{k})=\ell]\le\frac{\binom{m}{\lfloor m/2\rfloor}}{2^{m}}\lesssim\frac{1}{\sqrt{m}}.
\]
By letting $P(x_{1},\dots,x_{k})=x_{1}+\dots+x_{m}$
and $\ell=2\floor{m/2}-m$, we see that this bound is exactly best-possible.

For higher-degree polynomials, one cannot hope for a comparable bound
in terms of the number of nonzero coefficients: indeed, the multilinear polynomial
$P(x_{1},\dots,x_{k})=(x_{1}+x_{2})(x_{3}+\dots+x_{k})$ has $2k-4$
nonzero coefficients, but we have $\Pr[P(\xi_{1},\dots,\xi_{k})=0] \ge \Pr[x_1\ne x_2] = 1/2$.
There are a number of different ways that rule out this kind of degenerate
situation; in this paper, we will parameterise $P$ by the \emph{matching
number} of a certain hypergraph associated with $P$.
\begin{definition}
For a multilinear polynomial $P\in\mb R[x_{1},\dots,x_{k}]$ and $a\ge0$, let $H_{a}^{(d)}(P)$
be the $d$-uniform hypergraph on the vertex set $[k]$ with an edge
$I\in \binom{[k]}d$ whenever the coefficient of the monomial $x^I$ has absolute value strictly greater than $a$.
\end{definition}

\begin{definition}
A \emph{matching}
in a hypergraph $H$ is a collection of edges which are pairwise vertex-disjoint.
Let $\nu(H)$ be the maximum number of edges in a matching in $H$.
\end{definition}

It was first proved by Razborov and Viola~\cite{RV13} (building on work of
Rosi\'nski and Samorodnitsky~\cite{RS96} and Costello, Tao and Vu~\cite{CTV06})
that if $\nu(H_{0}^{(d)}(P))\ge m$ then
\begin{equation}
\sup_{\ell\in\mb R}\Pr[P(\xi_{1},\dots,\xi_{k})=\ell]\lesssim_{d}m^{-c_{d}}\label{eq:RV}
\end{equation}
for some $c_{d}>0$ depending only on $d$. That is to say, if $P$
has many degree-$d$ terms with nonzero coefficients, featuring disjoint
sets of variables, then $P(\xi_{1},\dots,\xi_{k})$ is anticoncentrated.

The bound in \cref{eq:RV} has since been improved, but in general
the best possible bound is still unknown. To ensure that the results in this paper are compatible with potential future improvements, we define a function to
describe ``the best possible bound for the polynomial Littlewood--Offord
problem'', as follows.
\begin{definition}
For $d,m\ge 0$, let 
\[
\operatorname{LO}_{d}(m)=\sup_{P,\ell,k}\Pr[P(\xi_1,\dots,\xi_k)=\ell],
\]
where the supremum ranges over all $\ell\in\mb R$, all $k\in \mb N$ and all multilinear polynomials $P\in \mb R[x_1,\dots,x_k]$
of degree at most $d$ 
with $\nu(H_{0}^{(d)}(P))\ge m$,
and we take $\xi_{1},\dots,\xi_{k}$ to be i.i.d.\ Rademacher random
variables.
\end{definition}

By considering the multilinear polynomial obtained from $(x_1+\dots + x_m)^d$
by substituting $x_i^2=1$ for all $i$,
it is easy to see that  
\[
\operatorname{LO}_{d}(m)\gtrsim\frac{1}{\sqrt{m}}.
\]

It is widely believed\footnote{A conjecture along these lines seems to have been first posed by Nguyen
and Vu (see \cite{MNV16,RV13}).} that the matching upper bound $\operatorname{LO}_{d}(m)\lesssim_{d}1/\sqrt{m}$
should also hold. In the case $d=1$ this is
a classical result of Littlewood--Offord and Erd\H os, and in the case $d=2$ this was recently proved by Kwan and
Sauermann~\cite{KS} (improving intermediate results by Costello~\cite{Cos13}).
For general $d$, the best available bound is due to Meka, Nguyen
and Vu~\cite{MNV16} (via a theorem of Kane~\cite{Kan14}), as follows.
\begin{theorem}
\label{thm:poly-LO} For any $d,m\in\mb N$,
\[
\operatorname{LO}_{d}(m)\lesssim_{d}\frac{(\log m)^{O_{d}(1)}}{\sqrt{m}}.
\]
\end{theorem}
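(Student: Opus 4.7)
The natural approach is induction on $d$. The base case $d=1$ is classical: Erd\H os's improvement of Littlewood--Offord gives $\operatorname{LO}_1(m) \lesssim 1/\sqrt{m}$ with no logarithmic loss. For the inductive step, I would use a decoupling argument combined with the matching structure to reduce to the $(d-1)$-degree case.

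Concretely, given a multilinear polynomial $P$ of degree $d$ with matching $I_1, \ldots, I_m$ in $H_0^{(d)}(P)$, fix a distinguished vertex $v_j \in I_j$ for each $j$ and let $V = \{v_1,\dots,v_m\}$, $W = [k]\setminus V$. Conditioning on $\xi_W$ yields a random polynomial $Q(x_V) = P(x_V, \xi_W)$ of degree at most $d$ in the variables indexed by $V$. The contribution of the monomial $\vec{x}^{I_j}$ to the coefficient of $x_{v_j}$ in $Q$ is $\widehat{P}(I_j) \cdot \prod_{u \in I_j \setminus \{v_j\}} \xi_u$, which has absolute value $|\widehat{P}(I_j)|>0$; other monomials of $P$ through $v_j$ contribute a Rademacher-weighted sum whose total is nonzero with constant probability by Erd\H os--Littlewood--Offord. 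Crucially, vertex-disjointness of the $I_j$ makes these events mutually independent across $j$, so a Chernoff bound guarantees that with probability $1-\exp(-\Omega(m))$ over $\xi_W$, at least $\Omega(m)$ of the $v_j$-coefficients in $Q$ are nonzero; that is, the linear part of $Q$ (in the variables $\xi_V$) has $\Omega(m)$ nonzero coefficients with overwhelming probability.

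At this point, a standard Rosi\'nski--Samorodnitsky / Kane-style decoupling identity lets one dominate $\sup_\ell \Pr[P(\xi)=\ell]$ by a geometric-mean expression involving the anticoncentration of a related multilinear polynomial of degree $d-1$ in independent Rademacher variables, whose own matching number is $\Omega(m)$ by the argument above. The inductive hypothesis then handles this lower-degree polynomial. The main obstacle is preventing the exponent from degrading under iteration: a naive chain of decouplings only yields $\operatorname{LO}_d(m)^2 \lesssim \operatorname{LO}_{d-1}(\Omega(m))$, which degrades the exponent to $1/2^d$ after $d$ steps, whereas the target exponent is $1/2$. To overcome this, I would employ a more refined (Meka--Nguyen--Vu type) decoupling that collapses all $d-1$ differencing directions into a single inequality controlled by a Gowers-type seminorm, or equivalently appeal to Kane's structural bound on low-degree multilinear polynomials to convert the matching lower bound on $H_0^{(d)}(P)$ into a Fourier-analytic lower bound on $P$, and then run a Hal\'asz-type integration of the characteristic function $\mathbb{E}[e^{itP(\xi)}]$. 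Each such refined step costs only a $\log m$ factor (from a Chernoff/Hoeffding estimate uniform over the differencing parameters), and since only $O_d(1)$ steps are needed to reach the linear case, the total loss is confined to $(\log m)^{O_d(1)}$, yielding the claimed bound.
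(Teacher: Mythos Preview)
The paper does not prove \cref{thm:poly-LO}; it is stated without proof as a result of Meka, Nguyen and Vu (via a theorem of Kane). So there is no in-paper argument to compare against, and your task would be to reproduce the cited result.

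Your proposal has a genuine gap exactly where you flag ``the main obstacle.'' The first two paragraphs set up a conditioning step that, iterated, recovers a Razborov--Viola-type bound with exponent of order $1/2^{d}$ (one caveat: the independence claim is false as stated, since the linear coefficient of $x_{v_j}$ in $Q$ depends on \emph{all} monomials of $P$ through $v_j$, and these can share $W$-variables with the monomials through $v_{j'}$; a second-moment argument would substitute for Chernoff here). But the final paragraph does not actually close the gap to exponent $1/2$. Invoking a ``Meka--Nguyen--Vu type decoupling'' is circular, since that \emph{is} the theorem you are proving; the appeals to Kane's structural bound, a Gowers-type seminorm, and Hal\'asz-type integration are gestures rather than arguments, and nothing you write explains how ``collapsing all $d-1$ differencing directions into a single inequality'' avoids the squaring that occurs at each decoupling step. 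The actual proof in \cite{MNV16} is not an iterated decoupling: it combines a regularity reduction for low-degree polynomials (this is where Kane's work enters and where the polylogarithmic factors arise) with an invariance principle and the Carbery--Wright inequality for Gaussian polynomials, which delivers the $m^{-1/2}$ exponent in one stroke. Your sketch contains neither ingredient, and without something of that nature the exponent degradation you correctly identify cannot be repaired.
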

We remark that the ``$\eps$'' in \cref{conj:dense}  is entirely
due to the logarithmic factors in \cref{thm:poly-LO}; if we knew that
$\operatorname{LO}_{d}(m)\lesssim_{d}1/\sqrt{m}$, we would be able
to obtain the optimal result \cref{eq:stronger-dense}. Also, we remark
that while the proof of \cref{thm:poly-LO} is a little involved, Razborov
and Viola's proof of the bound $\operatorname{LO}_{d}(m)\lesssim_{d}m^{-c_{d}}$
is very simple, and this weaker bound is enough for our proof of \cref{conj:1/e}.

We will need a version of \cref{thm:poly-LO} that takes terms of all
degrees into account (not just the degree-$d$ terms), as follows.
\begin{corollary}\label{cor:poly-LO}
Consider a multilinear polynomial $P\in\mb R[x_{1},\dots,x_{k}]$ of degree at most $d\ge 0$, and for each
$f\in\{0,\dots,d\}$ let $b_{f}$ be an upper bound on the absolute values of
all degree-$f$ coefficients. Suppose that for some $f\in \{0,\dots,d\}$ and $t\in \mb N$
we have $\nu(H_{a(f,t)}^{(f)})\ge t$, where $a(f,t)=tb_{f+1}+t^{2}b_{f+2}+\dots+t^{d-f}b_{d}$.
Then 
\[
\sup_{\ell\in\mb R}\Pr[P(\xi_{1},\dots,\xi_{k})=\ell]\lesssim_{d}\sup_{f\le d}\operatorname{LO}_{f}(\Omega_{d}(t)).
\]
\end{corollary}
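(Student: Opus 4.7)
The plan is to prove \cref{cor:poly-LO} by induction on $d - f$. In the base case $d = f$, the quantity $a(f,t)$ is an empty sum, so the hypothesis reduces to $\nu(H_0^{(f)}(P)) \ge t$; since $P$ has degree at most $f$, the bound $\Pr[P(\xi_{1},\dots,\xi_{k})=\ell]\le\operatorname{LO}_f(t)$ is immediate from the definition of $\operatorname{LO}_f$.

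For the inductive step ($d > f$), I would dichotomise on $\nu(H_0^{(d)}(P))$. If $\nu(H_0^{(d)}(P))\ge t/(4d^2)$, then the definition of $\operatorname{LO}_d$ directly yields $\Pr[P=\ell]\le\operatorname{LO}_d(t/(4d^2))\le \sup_{f'\le d}\operatorname{LO}_{f'}(\Omega_d(t))$. Otherwise $\nu(H_0^{(d)}(P)) < t/(4d^2)$, and the standard bound $\tau(H)\le d\,\nu(H)$ for $d$-uniform hypergraphs (the vertex set of any maximum matching is a vertex cover) yields a vertex cover $F$ of $H_0^{(d)}(P)$ of size $|F| < t/(4d)$. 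After freezing $(\xi_i)_{i\in F}$ to any choice of signs, the restricted polynomial $Q$ has degree at most $d - 1$, because every nonzero degree-$d$ monomial of $P$ meets $F$ and hence loses at least one variable.

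To apply the inductive hypothesis to $Q$ with the new matching-size parameter $t^F := \lfloor t/(4d)\rfloor$, I would verify the corresponding dominance condition as follows. Since the $S_i$ are disjoint, at least $t - |F| > t(1 - 1/(4d))$ of them avoid $F$; for each such $S_i$, the triangle inequality gives
\[
|\widehat Q(S_i)-\widehat P(S_i)| \le \sum_{j=1}^{d-f}\binom{|F|}{j}b_{f+j}\le a(f,|F|) \le a(f,t/(4d)) \le a(f,t)/(4d),
\]
so $|\widehat Q(S_i)| > (3/4)\,a(f,t)$. On the other hand, the new coefficient bounds for $Q$ satisfy $b^F_{f+j}\le\sum_{i=0}^{d-f-j}\binom{|F|}{i}b_{f+j+i}$, and after a direct counting of pairs $(j,i)$ with $j+i=k$ one obtains the new $a$-quantity $a^F(f,t^F)\le d\cdot a(f,t^F)\le a(f,t)/4$. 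Since $(3/4)\,a(f,t)>a(f,t)/4$, the hypothesis of the corollary holds for $Q$ with degree bound $d-1$ and matching-size parameter $t^F$. Averaging the resulting inductive bound over the choice of $\xi_F$ gives the same bound on $\Pr[P=\ell]$, and absorbing the multiplicative factor $1/(4d)$ into $\Omega_d(\cdot)$ completes the induction.

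The main obstacle is this parameter bookkeeping in Case B. Freezing variables can only \emph{enlarge} the coefficient bounds on $Q$ relative to $P$ (because higher-degree monomials of $P$ contribute their mass to lower-degree coefficients of $Q$), so it is not automatic that the corollary's hypothesis survives the restriction. The compensating trick is to scale the matching-size parameter down by the factor $1/(4d)$ when re-applying the corollary: this shrinks the new $a^F$-quantity by enough to leave room for the strict dominance $|\widehat Q(S_i)| > a^F(f,t^F)$, and since $d-f$ decreases at each recursive step the total loss accumulates only to a constant depending on $d$.
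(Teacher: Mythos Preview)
Your proposal is correct and follows essentially the same approach as the paper: induct, dichotomise on $\nu(H_0^{(d)}(P))$, and in the small-matching case freeze a vertex cover of the top-degree hypergraph to drop the degree while preserving the matching hypothesis at level $f$. The only differences are cosmetic---the paper inducts on $d$ rather than $d-f$, uses the vertex set of a maximum matching with $|I|<t/2$ (and the clean bound ``change $\le a(f,t)-a(f,t/2)$'') instead of your $|F|<t/(4d)$, and glosses over the new coefficient bounds $b^F_g$ that you track explicitly; your more careful bookkeeping is arguably an improvement in rigour.
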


\begin{proof}
We prove the desired statement by induction on $d$. The case $d=0$
holds vacuously, since $\nu(H_{a}^{(0)}(P))\le1$ for any $P,a$.
Fix $d\ge1$ and suppose that the desired bound is true for smaller
$d$. Fix a degree-$d$ multilinear polynomial $P\in\mb R[x_{1},\dots,x_{k}]$,
and suppose that for some $f\le d$ we have $\nu(H_{a(f,t)}^{(f)}(P))\ge t$.

We split into cases. First, suppose that $\nu(H_{0}^{(d)}(P))\ge t/(2d)$.
In this case we have
\[
\sup_{\ell\in\mb R}\Pr[P(\xi_{1},\dots,\xi_{k})=\ell]\le\operatorname{LO}_{d}(t/(2d)),
\]
as desired.

Otherwise, we have $\nu(H_{0}^{(d)}(P)) < t/(2d)$. This means that
$f\le d-1$. Consider a maximum matching $M$ in $H_{0}^{(d)}(P)$,
and let $I\subseteq[k]$ be the set of vertices in this
matching, so $|I|=d|M|\le t/2$. Let $\vec{\xi}[I]:=(\xi_{i})_{i\in I}$,
and consider any outcome $\vec{\xi}[I]=(\xi_{i})_{i\in I}\in\{-1,1\}^{I}$.
Let $P_{\vec{\xi}[I]}\in\mb R[x_{i}:i\notin I]$ be the multilinear polynomial obtained
from $P(x_{1},\dots,x_{k})$ by substituting $x_{i}=\xi_{i}$ for
all $i\in I$. Since $M$ is a maximum matching, $P_{\vec{\xi}[I]}$
has degree at most $d-1$. Also, note that the multilinear degree-$f$
coefficients in $P_{\vec{\xi}[I]}$ differ from the corresponding
coefficients in $P$ by at most 
\[
\binom{|I|}{1}b_{f+1}+\binom{|I|}{2}b_{f+2}+\dots+\binom{|I|}{d-f}b_{d}\le a(f,t)-a(f,t/2),
\]
so all the edges of the induced subgraph $H_{a(f,t)}^{(f)}(P)[[k]\setminus I]$
are also edges of $H_{a(f,t/2)}^{(f)}(P_{\vec{\xi}[I]})$, and therefore, 
\[
\nu(H_{a(f,t/2)}^{(f)}(P_{\vec{\xi}[I]}))\ge t-|I|\ge t/2.
\]
The induction hypothesis then yields
\[
\sup_{\ell\in\mb R}\Pr\big[P(\xi_{1},\dots,\xi_{k})=\ell\big]\leq \sup_{\vec{\xi}[I]}\sup_{\ell\in\mb R}\Pr\big[P_{\vec{\xi}[I]}(\xi_{i}:i\notin I)=\ell\,\big|\,\vec{\xi}[I]\big]\lesssim_{d}\sup_{f\le d-1}\operatorname{LO}_{f}(\Omega_{d}(t)),
\]
as desired.
\end{proof}

\section{Describing the slice via a product measure}\label{sec:slice-coupling}
In this section we record a general lemma describing polynomials of ``slice'' measures in terms of polynomials of independent Rademacher random variables. This is necessary to apply the results from \cref{sec:poly-LO} to study the parameters $\on{ind}_r(k,\ell)$, which can naturally be understood as being about polynomials evaluated on a slice measure.

Specifically, for any hypergraph $G$ on the vertex set $[n]$, and any $k$-vertex subset $U$, we can express
\[
e(G[U]) = \sum_{W \in E(G)} \vec \sigma^W,
\]
where $E(G)$ is the set of edges of $G$, and $\vec{\sigma}$ is the vector in $\{0,1\}^n$ with exactly $k$ ones, defined by setting $\sigma_i = 1$ if $i \in U$ and $\sigma_i = 0$ otherwise. When $U$ is a uniformly random $k$-vertex subset, the corresponding $\vec{\sigma}$ is a uniformly random vector in $\{0,1\}^n$ with exactly $k$ ones (this is called a ``slice of the Boolean hypercube'').

\begin{definition}\label{def:lagrange}
    For an $r$-uniform hypergraph $G$ on the vertex set $[n]$, let $\lambda_G\in\mb R[x_{1},\dots,x_{n}]$ 
    denote the polynomial $\sum_{W\in E(G)}\vec x^W$. In other words, the coefficient $\wh{\lambda_G}(W)$ of $\vec x^W$ is $1$ if $W$ is an edge of $G$, and $0$ if $W$ is not an edge of $G$.
\end{definition}

\begin{definition}\label{def:slice}
    $\on{Slice}(n,k)$ denotes the subset of vectors in $\{0,1\}^n$ with exactly $k$ different 1-entries. We write $\vec \sigma \sim \on{Slice}(n,k)$ to mean that the random vector $\vec \sigma$ is uniformly distributed on $\on{Slice}(n,k)$. 
\end{definition}
Now, note that we can obtain $\on{Slice}(n,k)$ (for $n\ge 2k$) by first randomly choosing $k$ \emph{pairs} of disjoint vertices, then flipping an unbiased coin for each pair to decide which of the pair to actually take (as a 1-entry). We introduce notation for this, as follows.
\begin{definition}\label{def:slice-coupling}
Consider $k,n\in\mb N$. Let \[\vec{v}=\big(v_{1}(-1),v_{1}(1),\dots,v_{k}(-1),v_{k}(1)\big)\]
be a uniformly random sequence of $2k$ distinct elements of $[n]$,
and let $V_{\vec{v}}$ be the set of elements in $\vec v$. Independently from $\vec v$, let $\vec{\xi}=(\xi_{1},\dots,\xi_{k})$
be a sequence of $k$ i.i.d.\ Rademacher random variables. Let $U_{\vec{v},\vec{\xi}}=\{v_{1}(\xi_{1}),\dots,v_{k}(\xi_{k})\}$,
so $U_{\vec{v},\vec{\xi}}$ is a uniformly random subset of $k$ elements
of $[n]$. Define $\vec{\sigma}_{\vec v,\vec \xi}=(\sigma_{1},\dots,\sigma_{n})\in\{0,1\}^n$
by
\[
\sigma_{v}=\begin{cases}
1 & \text{if }v\in U_{\vec v,\vec \xi},\\
0 & \text{otherwise},
\end{cases}
\]
so $\vec{\sigma}_{\vec v,\vec \xi}\sim\operatorname{Slice}(n,k)$.
\end{definition}
The setup in \cref{def:slice-coupling} allows us to interpret a polynomial on the slice as a polynomial in Rademacher random variables, as follows (a similar slightly weaker result appears as \cite[Lemma~2.8]{KST19}).
\begin{lemma}\label{lem:poly-coupling}
Recall the setup in \cref{def:slice-coupling}, and consider an $n$-variable
multilinear polynomial $\lambda = \sum_{W\subseteq [n]}\wh{\lambda}(W)\vec x^{W}\in\mb R[x_{1},\dots,x_{n}]$. 
Then, $\lambda(\vec{\sigma}_{\vec v,\vec \xi})$ is a multilinear polynomial 
of $\vec{\xi}$, with coefficients that depend on $\vec{v}$. Specifically, we have
\[
\lambda(\vec{\sigma}_{\vec v,\vec \xi})=\sum_{I\subseteq[k]}A_{\vec{v}}(I)\vec{\xi}^{\,I},
\]
for coefficients $A_{\vec{v}}(I)$ given explicitly by
\[
A_{\vec{v}}(I)=\sum_{W \in \mc{W}_{\vec{v}}(I)}(-1)^{|W\cap\{v_{i}(-1):i\in I\}|}\wh{\lambda}(W)2^{-|W|},
\]
where $\mc{W}_{\vec{v}}(I)$ is the collection of all subsets $W\subseteq V_{\vec v}$ satisfying $|W\cap\{v_{i}(-1),v_{i}(1)\}|=1$ for every $i\in I$. 
\end{lemma}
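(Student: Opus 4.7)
The plan is a direct Fourier-style computation: express each indicator $\sigma_v$ in terms of $\vec\xi$, substitute into $\lambda(\vec\sigma_{\vec v,\vec\xi})$, and collect coefficients. The key observation is that for $v = v_i(\varepsilon) \in V_{\vec v}$, whether $v$ lies in $U_{\vec v,\vec\xi}$ is determined by whether $\xi_i=\varepsilon$, so
\[
\sigma_{v_i(\varepsilon)} = \frac{1+\varepsilon\xi_i}{2},
\qquad\text{while}\qquad \sigma_v = 0 \text{ for } v\notin V_{\vec v}.
\]
This lets us rewrite $\vec\sigma^W = \prod_{v\in W}\sigma_v$ purely in terms of the Rademacher variables, for every $W\subseteq[n]$.

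First, I would reduce the sum $\lambda(\vec\sigma_{\vec v,\vec\xi}) = \sum_W \wh{\lambda}(W)\,\vec\sigma^W$ to monomials $W\subseteq V_{\vec v}$, since any $v\in W\setminus V_{\vec v}$ forces $\vec\sigma^W=0$. Next, I would partition $W$ according to how it meets each pair $\{v_i(-1),v_i(1)\}$, noting that if $W$ contains both members of a pair then
\[
\sigma_{v_i(-1)}\sigma_{v_i(1)} = \tfrac{(1-\xi_i)(1+\xi_i)}{4} = \tfrac{1-\xi_i^2}{4},
\]
which is identically $0$ once we reduce modulo $\xi_i^2=1$ (as we must, since we want a \emph{multilinear} polynomial in $\vec\xi$). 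The surviving $W$ are exactly those which pick at most one element from each pair; for such a $W$, writing $I_1(W)=\{i:W\cap\{v_i(-1),v_i(1)\}\ne\emptyset\}$ and letting $\varepsilon_i^W$ denote the sign with $v_i(\varepsilon_i^W)\in W$, one has $|W|=|I_1(W)|$ and
\[
\vec\sigma^W = 2^{-|W|}\prod_{i\in I_1(W)}(1+\varepsilon_i^W\xi_i) = 2^{-|W|}\sum_{J\subseteq I_1(W)}\Bigl(\prod_{i\in J}\varepsilon_i^W\Bigr)\vec\xi^{\,J}.
\]

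Finally, I would interchange the order of summation and collect the coefficient of $\vec\xi^{\,I}$, yielding
\[
A_{\vec v}(I) = \sum_{W}\wh{\lambda}(W)\,2^{-|W|}\prod_{i\in I}\varepsilon_i^W,
\]
where the sum ranges over $W\in\mc W_{\vec v}(I)$ (i.e.\ those $W\subseteq V_{\vec v}$ meeting the $I$-indexed pairs in exactly one element and no pair in two elements). Since $\varepsilon_i^W = -1$ iff $v_i(-1)\in W$, the product $\prod_{i\in I}\varepsilon_i^W$ is precisely $(-1)^{|W\cap\{v_i(-1):i\in I\}|}$, matching the stated formula.

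There is no real obstacle: the entire argument is bookkeeping once one uses the Fourier representation of each $\sigma_v$. The only subtlety worth emphasising in the write-up is the vanishing step — explaining why the reduction $\xi_i^2=1$ legitimately kills all $W$ that contain both endpoints of some pair — since this is what guarantees that $\lambda(\vec\sigma_{\vec v,\vec\xi})$ really is multilinear in $\vec\xi$ with the coefficients $A_{\vec v}(I)$ displayed above.
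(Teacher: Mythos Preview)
Your proposal is correct and takes essentially the same approach as the paper: write $\sigma_{v_i(\pm1)}=(1\pm\xi_i)/2$, observe that $\vec\sigma^{\,W}=0$ unless $W$ meets each pair $\{v_i(-1),v_i(1)\}$ in at most one point, expand the surviving products, and collect the coefficient of each $\vec\xi^{\,I}$. The only cosmetic difference is that you justify the vanishing step via the reduction $\xi_i^2=1$, whereas the paper simply notes $\vec\sigma^{\,W}=0$ directly (since exactly one of $\sigma_{v_i(-1)},\sigma_{v_i(1)}$ is zero).
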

\begin{remark}
\label{rmk:coeff-bound}
In particular, if $\lambda$ has degree at most $d$ and all coefficients of $\lambda$ have absolute value at most $q$, then for any $I\subseteq [k]$, we have 
\[|A_{\vec{v}}(I)| \leq q\cdot |\mc{W}_{\vec{v}}(I) \cap \{W\subseteq [n]: |W|\leq d\}|
    \leq q2^{|I|}n^{d-|I|}.\] 
Additionally, if $|I| > d$, then $\mc{W}_{\vec{v}}(I) \cap \{W\subseteq [n]: |W| \leq d\} = \emptyset$, so that $A_{\vec{v}}(I) = 0$. 
\end{remark}

\begin{proof}
For convenience of notation, write $\vec{\sigma} = \vec{\sigma}_{\vec{v}, \vec{\xi}}$. Then, $\sigma_{v_i(1)} = (1+\xi_i)/2$ and $\sigma_{v_i(-1)} = (1-\xi_i)/2$ for all $i \in [k]$, and $\sigma_{w} = 0$ for all other $w$. Let $\mc{W}_{\vec{v}}$ denote the set of all subsets $W\subseteq V_{\vec{v}}$ which satisfy $|W \cap \{v_i(-1), v_i(1)\}| \leq 1$ for each $i$. Note that if $W \notin \mc{W}_{\vec{v}}$, then $\vec \sigma^{W} = 0$. Therefore, 
\begin{align*}
    \lambda(\vec{\sigma}) &= \sum_{W \subseteq [n]} \wh{\lambda}(W) \vec \sigma^{W} = \sum_{W \in \mc{W}_{\vec{v}}}\wh{\lambda}(W)\vec \sigma^W\\
    &= \sum_{W \in \mc{W}_{\vec{v}}}\wh{\lambda}(W)2^{-|W|} \prod_{i: v_i(1) \in W}(1+\xi_i) \prod_{j: v_j(-1) \in W}(1-\xi_j)\\
    &= \sum_{I \subseteq [k]} \left(\sum_{W \in \mc{W}_{\vec{v}}(I)}(-1)^{|W \cap \{v_i(-1): i \in I\}|}\wh{\lambda}(W)2^{-|W|}\right)\vec{\xi}^{\,I} = \sum_{I\subseteq [k]}A_{\vec{v}}(I) \vec{\xi}^{\,I}. \qedhere
\end{align*}
\end{proof}

\section{A strong bound for edge-statistics ``in the bulk''}\label{sec:dense}
In this section we prove \cref{conj:dense}. It will be a simple consequence of the following anticoncentration inequality for edge-statistics of dense hypergraphs.
\begin{lemma}\label{conj:dense-LO}
Let $n=2k$, let $\beta \in (0,1/2]$ and let $G$ be an $n$-vertex $r$-uniform hypergraph with $\beta \binom n r\le e(G)\le (1-\beta)\binom nr$. Then, for $U$ a uniformly random subset of $k$ vertices of $G$, we have 
\[
\sup_{\ell\in\mb R}\Pr[e(G[U])=\ell]\lesssim_r\sup_{f\le r}\operatorname{LO}_{f}(\Omega_r(\beta n)).
\]
\end{lemma}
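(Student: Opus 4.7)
The plan is to express $e(G[U])$ as a polynomial evaluated on the slice $\on{Slice}(n,k)$, translate it to a polynomial in independent Rademachers via the coupling of \cref{sec:slice-coupling}, and then apply \cref{cor:poly-LO}. Specifically, $e(G[U])=\lambda_G(\vec\sigma)$ for $\vec\sigma\sim\on{Slice}(n,k)$; realising $\vec\sigma=\vec\sigma_{\vec v,\vec\xi}$ as in \cref{def:slice-coupling} (this uses $n=2k$) and applying \cref{lem:poly-coupling} gives, conditional on $\vec v$,
\[
\lambda_G(\vec\sigma_{\vec v,\vec\xi})=P_{\vec v}(\vec\xi):=\sum_{I\subseteq[k],\,|I|\le r}A_{\vec v}(I)\,\vec\xi^{\,I},
\]
a multilinear polynomial in $\vec\xi$ of degree at most $r$ whose coefficients satisfy $|A_{\vec v}(I)|\le 2^{|I|}n^{r-|I|}$ by \cref{rmk:coeff-bound}. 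Since $\Pr[e(G[U])=\ell]=\mb E_{\vec v}\Pr_{\vec\xi}[P_{\vec v}(\vec\xi)=\ell]$, it suffices to control the inner probability for a typical realisation of $\vec v$.

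I would then apply \cref{cor:poly-LO} at the top level $f=r$, where $a(r,t)=0$ (empty sum) and the sole hypothesis to check is that the $r$-uniform hypergraph $H_0^{(r)}(P_{\vec v})$ on vertex set $[k]$---whose edges are those $I\in\binom{[k]}{r}$ with $A_{\vec v}(I)\neq 0$---contains a matching of size at least $t=c_r\beta n$ for a suitably small constant $c_r$ depending only on $r$. Granting this, \cref{cor:poly-LO} yields
\[
\sup_{\ell\in\mb R}\Pr_{\vec\xi}[P_{\vec v}(\vec\xi)=\ell]\lesssim_r\sup_{f\le r}\on{LO}_f(\Omega_r(\beta n))
\]
for every good $\vec v$, and the lemma follows by averaging over $\vec v$ and bounding the contribution of the exceptional event trivially by $1$.

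The main obstacle is therefore to verify the matching lower bound for most $\vec v$. Unfolding \cref{lem:poly-coupling}, for $|I|=r$ the coefficient $A_{\vec v}(I)$ equals $2^{-r}$ times the signed discrepancy
\[
D_{\vec v}(I):=\sum_{\epsilon\in\{-1,+1\}^I}\Bigl(\prod_{i\in I}\epsilon_i\Bigr)\,\one\!\left[\{v_i(\epsilon_i):i\in I\}\in E(G)\right],
\]
i.e.\ an alternating sum of edge-indicators over the $2^r$ choices of one vertex from each of the $r$ random pairs $\{v_i(-1),v_i(1)\}_{i\in I}$. Here I would invoke the hypergraph discrepancy result of Bollob\'as--Scott~\cite{BS15}: the hypothesis $\beta\binom{n}{r}\le e(G)\le(1-\beta)\binom{n}{r}$ should force $D_{\vec v}(I)\neq 0$ for an $\Omega_r(\beta)$-fraction of all $I\in\binom{[k]}{r}$, with high probability over $\vec v$. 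Passing from this positive-density family of non-vanishing-discrepancy $r$-subsets to an actual matching of size $\Omega_r(\beta n)$ in $H_0^{(r)}(P_{\vec v})$---leveraging the inherent disjointness of the random pairing $\vec v$, e.g.\ via a random-subset or nibble-style argument---is the principal technical step and where the Bollob\'as--Scott framework does the real work.
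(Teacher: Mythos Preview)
Your setup is exactly right: write $e(G[U])=\lambda_G(\vec\sigma_{\vec v,\vec\xi})=P_{\vec v}(\vec\xi)$ via \cref{lem:poly-coupling}, and then try to feed $P_{\vec v}$ into \cref{cor:poly-LO}. The gap is your decision to apply \cref{cor:poly-LO} \emph{only at the top level} $f=r$. The Bollob\'as--Scott result (\cref{lem:BS}) does not say anything about $Q_r(G)$ specifically; it gives $Q_s(G)\gtrsim_r\beta n^{r+s}$ for \emph{some} $s\in[r]$, and this $s$ can be strictly smaller than $r$. In particular your claim that the density hypothesis ``should force $D_{\vec v}(I)\ne0$ for an $\Omega_r(\beta)$-fraction of $I\in\binom{[k]}{r}$'' is false in general, and in fact $H_0^{(r)}(P_{\vec v})$ can be \emph{empty}.

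Here is a concrete counterexample for $r=3$. Partition $[n]$ into $S$ and $\bar S$ with $|S|=n/2$, and let $G$ consist of all triples lying entirely inside $S$ or entirely inside $\bar S$; then $e(G)\approx\tfrac14\binom n3$, so $\beta\approx\tfrac14$. For any $I\in\binom{[k]}{3}$, the indicator $\one[\{v_i(\epsilon_i):i\in I\}\in E(G)]$ depends on $\epsilon$ only through which side each $v_i(\epsilon_i)$ lands on. If some pair $\{v_i(-1),v_i(1)\}$ lies in a single part, the indicator is independent of $\epsilon_i$ and summing over $\epsilon_i$ kills $D_{\vec v}(I)$. If all three pairs straddle, the only $\epsilon$'s giving an edge are $\epsilon$ and $-\epsilon$ for a single sign pattern, and these contribute $\prod\epsilon_i+(-1)^3\prod\epsilon_i=0$. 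Hence $A_{\vec v}(I)=0$ for every $I$ of size $3$ and every $\vec v$, so $H_0^{(3)}(P_{\vec v})$ has no edges at all, and your matching hypothesis at $f=3$ fails outright. (The polynomial $P_{\vec v}$ has degree at most $2$ here; the anticoncentration must come from levels $s\le 2$.)

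The paper's proof lets Bollob\'as--Scott choose the level $s$, and then (via \cref{lem:BS-subsampling}, which is essentially an application of \cref{lem:concentration-sequences}) shows that with high probability over $\vec v$ there are $t\gtrsim_r\beta n$ \emph{disjoint} $s$-sets $I_1,\dots,I_t$ with $|A_{\vec v}(I_j)|\gtrsim_r\beta n^{r-s}$. The quantitative lower bound on $|A_{\vec v}(I_j)|$ is essential: at levels $s<r$ the threshold $a(s,t)$ in \cref{cor:poly-LO} is nonzero (roughly $t\cdot n^{r-s-1}$), so ``nonzero coefficient'' is not enough---one needs $|A_{\vec v}(I_j)|$ to beat this threshold, which is exactly what the Bollob\'as--Scott bound $Q_s(G)\gtrsim\beta n^{r+s}$ provides. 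Incidentally, the passage from ``positive density of good $s$-tuples'' to ``large matching of good $s$-tuples'' is not a delicate nibble argument; once one has $\Omega_r(\beta n^{2s})$ good ordered $2s$-tuples, a direct second-moment computation on $\lfloor k/s\rfloor$ disjoint blocks (this is \cref{lem:concentration-sequences}) gives the matching.
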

Before proving \cref{conj:dense-LO}, we see how to deduce \cref{conj:dense}. 
\begin{proof}[Proof of \cref{conj:dense} given \cref{conj:dense-LO}]
 Recall that we are to prove that $\operatorname{ind}_{r}(k,\ell)\le (\alpha k)^{-1/2+\varepsilon}$, where $\operatorname{ind}_{r}(k,\ell)=\lim_{n \to \infty} N_r(n, k, \ell)/\binom nk$ and $N_r(n, k, \ell)$ is the maximum possible number of $k$-vertex subsets of an
$n$-vertex graph which induce exactly $\ell$ edges.  Let $n=2k$, let $G$ be an $n$-vertex graph and let $U$ be a uniformly random subset of $k$ vertices of $G$. Due to the fact that $N_r(n,k,\ell)/\binom nk$ is nonincreasing in $n$, it suffices to prove that 
\[\Pr[e(G[U])=\ell]\le \frac1{(\alpha k)^{1/2-\varepsilon}}.\]
when $\alpha k$ is sufficiently large in terms of $\varepsilon,r$. 

We may assume that $G$ has at least $\alpha \binom k r$ edges and at least $\alpha \binom k r$ non-edges; otherwise it is impossible to have $e(G[U])=\ell$. Recalling that $n=2k$ (so $\binom k r\gtrsim \binom n r$), this means that the assumption in \cref{conj:dense-LO} is satisfied for some $\beta\gtrsim\alpha$, and \cref{conj:dense-LO} implies that 
\[\Pr[e(G[U])=\ell]\lesssim_r \sup_{f\le r}\operatorname{LO}_{f}(\Omega_r(\alpha k)).\]
The desired result then follows from \cref{thm:poly-LO}.
\end{proof}

We need a few key ingredients to prove \cref{conj:dense-LO}. First, we need a lemma essentially due to Bollob\'as and Scott~\cite{BS15}.

\begin{definition}
Consider an $r$-uniform hypergraph $G$ on the vertex set $[n]$. For $W \in \binom{[n]}{r}$, let $\wh G(W)=1$ when $W$ is an edge and let $\wh G(W)=0$ when $W$ is not an edge. 
For $s\in [r]$ define
\[
Q_{s}(G)=\sum_{\vec{x}}\left|\sum_{W}(-1)^{|W\cap\{x_{1}(-1),\dots,x_{s}(-1)\}|}\wh{G}(W)\right|
\]
where the first sum is over sequences 
$\vec{x}=(x_{1}(-1),x_{1}(1),\dots, x_s(-1),x_s(1))$
of $2s$ distinct vertices, and the second sum is over all $W \in \binom{[n]}{r}$ which satisfy $|W\cap\{x_{i}(-1),x_{i}(1)\}|=1$ for
every $i\in [s]$.
\end{definition}

\begin{lemma}\label{lem:BS}
Consider $r,n$ satisfying $n\ge3r$, and consider an $r$-uniform hypergraph $G$ on the vertex set $[n]$, satisfying $\beta\binom nr\le e(G)\le (1-\beta)\binom n r$.
Then, for some $s\in [r]$, 
we have
\[
Q_{s}(G)\gtrsim_{r}\beta n^{r+s}.
\]
\end{lemma}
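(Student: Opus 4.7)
The plan is a Parseval-type second-moment argument. Set $p = e(G)/\binom{n}{r}$, so that $p \in [\beta, 1-\beta]$ by hypothesis, and let $f(W) := \wh{G}(W) - p$. Then $\sum_W f(W)^2 = p(1-p)\binom{n}{r} \gtrsim_r \beta n^r$. First I would observe that for any $s \ge 1$, the inner alternating sum in the definition of $Q_s$ is unchanged when $\wh{G}$ is replaced by $f$: for any fixed ``complementary'' set $T \subseteq [n] \setminus V_{\vec{x}}$ of size $r-s$, the sum of the signs $(-1)^{|\{i:\epsilon_i=-1\}|}$ over the $2^s$ choices of $\vec{\epsilon} \in \{-1,1\}^s$ factors as $\prod_{i=1}^s (1 + (-1)) = 0$, so the constant $p$ contributes zero. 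Hence $Q_s(G) = \sum_{\vec{x}} |\wh{f}_s(\vec{x})|$, where $\wh{f}_s(\vec{x})$ is the inner sum with $f$ in place of $\wh{G}$.

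Next is an $L^1$-to-$L^2$ comparison. The inner sum has at most $2^s \binom{n-2s}{r-s} \lesssim_r n^{r-s}$ terms, each of absolute value at most $1$, so $|\wh{f}_s(\vec{x})| \lesssim_r n^{r-s}$, and $|a| \ge a^2/\max|a|$ yields
\[
Q_s(G) \gtrsim_r n^{s-r} \sum_{\vec{x}} \wh{f}_s(\vec{x})^2.
\]
It therefore suffices to find $s \in [r]$ with $\sum_{\vec{x}} \wh{f}_s(\vec{x})^2 \gtrsim_r \beta n^{r+s}$.

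To establish this, I would expand the square, sum over $\vec{x}$, and split into a diagonal ($W = W'$) and off-diagonal contributions. A direct count shows that the diagonal contribution is $\Theta_r(n^s)\, \|f\|_2^2 \gtrsim_r \beta n^{r+s}$, coming from choosing which $s$ of the $r$ coordinates of $W$ will occupy pairs, which sign within each pair, and which free complements to take outside $W$. For the off-diagonal with $|W \triangle W'| = 2d$, the same kind of count partitions the $s$ pair-slots into ``shared'' (where $W, W'$ agree) and ``split'' (where they disagree), and each split pair contributes a factor of $(-1)$ to the sign, producing a contribution of $(-1)^d c_{r,s,d}\, n^{s-d}(n-r-d)_{r-d}\cdot \sum_{|W\triangle W'|=2d} f(W) f(W')$. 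The two natural strategies are: (a) take the extremal case $s = r$, where $\wh{f}_r(\vec{x})$ is literally a top Fourier coefficient of $f$ restricted to the $r$-dimensional Boolean hypercube defined by the pairs; or (b) take a weighted sum $\sum_s w_s \sum_{\vec{x}} \wh{f}_s(\vec{x})^2$ so as to realise a clean Parseval identity $\gtrsim_r \beta n^{2r}$, then pigeonhole in $s$.

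The main obstacle is controlling the off-diagonal correlations. For each $d$, the Johnson distance-$d$ graph has valency $\binom{r}{d}\binom{n-r}{d} \lesssim_r n^d$, and the standard spectral bound gives $|\sum_{|W\triangle W'|=2d} f(W) f(W')| \lesssim_r n^d \|f\|_2^2$, which exactly matches the order of the diagonal. Making either (a) or (b) succeed therefore requires exploiting the alternating signs $(-1)^d$ in the expansion to produce genuine cancellation --- essentially invoking the spectral decomposition of the Johnson scheme to find a combination that has only the diagonal as its ``leading eigenspace''. If this linear-algebraic step proves too delicate, a backup is induction on $r$: a small $Q_1(G)$ forces the degree sequence to be nearly constant (since $Q_1(G) = \sum_{v \neq w}|d(v)-d(w)| \gtrsim n^{2-r}\sum_v (d(v)-\bar d)^2$), whence most links $G_v$ still have density in $[\beta', 1-\beta']$; the inductive hypothesis gives a large $Q_{s'}(G_v)$, which lifts to $Q_{s'+1}(G)$ via the identity that the inner sum for $Q_{s'+1}(G)$ at $\vec{x}=(v,v',\vec y)$ equals (up to sign) the inner sum for $Q_{s'}$ applied to the signed link-difference $\wh{G}_v - \wh{G}_{v'}$ at $\vec y$.
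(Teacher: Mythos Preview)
The paper does not prove this from scratch: it observes that $Q_s(G)$ coincides (after the shift $f=\wh G-p$ and a harmless normalisation) with the entry $q_s(f)$ of Bollob\'as and Scott's ``$W$-vector'', and then quotes their Lemma~9, which states $\sum_s q_s(f)\gtrsim_r n^{-r}\|f\|_1$. After the easy computation $\|f\|_1=2p(1-p)\binom nr\gtrsim_r\beta n^r$ and the observation $q_0(f)=0$, a pigeonhole on $s$ finishes. So your plan is effectively an attempt to reprove (an $L^2$ analogue of) the Bollob\'as--Scott lemma itself, and you should expect it to require comparable work.

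There is an arithmetic slip that breaks your reduction as written. From $Q_s\gtrsim_r n^{s-r}\sum_{\vec x}\wh f_s(\vec x)^2$, your stated target $\sum_{\vec x}\wh f_s(\vec x)^2\gtrsim_r\beta n^{r+s}$ only yields $Q_s\gtrsim_r\beta n^{2s}$, which falls short of the required $\beta n^{r+s}$ for every $s<r$; the correct target via this route is $\sum_{\vec x}\wh f_s(\vec x)^2\gtrsim_r\beta n^{2r}$. But the diagonal you compute is only $\Theta_r(\beta n^{r+s})$, so for $s<r$ the off-diagonal must \emph{contribute positively}, not merely be bounded. Concretely, for $r=2$ and $G=K_{\alpha n,(1-\alpha)n}$ with $\alpha$ small (so $\beta$ is of order $\alpha$), one has $\sum_{\vec x}\wh f_2(\vec x)^2$ of order $\alpha^2n^4$, so the ``take $s=r$'' route (your option~(a)) genuinely fails; meanwhile $\sum_{\vec x}\wh f_1(\vec x)^2$ is of order $\alpha n^4=\beta n^{2r}$, but only because the $d=1$ off-diagonal term dominates the diagonal by a factor of $n$. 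What is actually happening is that the quadratic forms $f\mapsto\sum_{\vec x}\wh f_s(\vec x)^2$ diagonalise over the Johnson-scheme eigenspaces, essentially picking out level $s$ with eigenvalue of order $n^r$; establishing this would rescue your plan, but it is not lighter than the Bollob\'as--Scott black box you are trying to avoid. Your induction backup has a separate mismatch: the lifting identity gives the inner sum for $Q_{s'+1}(G)$ at $(v,v',\vec y)$ as a \emph{difference} of link contributions, so you need many \emph{pairs} of links to differ substantially at level $s'$, which does not follow from applying the inductive hypothesis to each $G_v$ individually.
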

\begin{proof}
Let $p:=e(G)/\binom{n}{r}$ denote the density of $G$. Since $\beta\binom nr\le e(G)\le (1-\beta)\binom n r$, we have $\beta \le p \le 1-\beta$. Define the function $f:\binom{[n]}r\to \mb R$
by
\[f(W)=\begin{cases}
    1-p &\text{ if }W\text{ is an edge of }G,\\
    -p &\text{ if }W\text{ is not an edge of }G.
\end{cases}\]
Then, we have
\[\|f\|_1=\left(e(G)\cdot (1-p)+\left(\binom nr-e(G)\right)\cdot p\right)=2p(1-p)\cdot \binom nr\ge 2\beta(1-\beta) \cdot \binom nr\gtrsim \beta n^r.\]
Let $(n)_{2s}$ denote the falling factorial $n(n-1)\dots(n-2s+1)$ (i.e., the number of ways to choose a
sequence of $2s$ different vertices); in \cite{BS15}, Bollob\'as and Scott define\footnote{Actually, they first define the $W$-vector in a slightly different way, and prove that the two definitions are equivalent in \cite[Lemma~8]{BS15}.} the ``$W$-vector'' of $f$ to be the vector $(q_0(f),\dots,q_r(f))$ given by
\[q_s(f)=\frac1{(n)_{2s}}\binom{n-2s}{r-s}^{-1}\sum_{\vec{x}}\left|\sum_{W}(-1)^{|W\cap\{x_{1}(-1),\dots,x_{s}(-1)\}|}f(W)\right|
\]
where the first sum is over sequences
$\vec{x}=(x_{1}(-1),x_{1}(1),\dots, x_s(-1),x_{s}(1))$
of $2s$ distinct vertices, and the second sum is over all $W \in \binom{[n]}{r}$ which satisfy $|W\cap\{x_{i}(-1),x_{i}(1)\}|=1$ for
every $i\in [s]$.
Note that $q_s(f)$ is not affected by adding a constant function to $f$, so\[q_s(f)=\frac1{(n)_{2s}}\binom{n-2s}{r-s}^{-1}Q_s(G)\lesssim_r n^{-r-s}Q_s(G)\]  and in particular $q_0(f)=0$.

Then, \cite[Lemma~9]{BS15} says that
\[q_0(f)+\dots+q_r(f)\gtrsim_r n^{-r}\|f\|_1;\]the desired result follows. 
\end{proof}

Next, the following technical lemma shows that random variables of a certain type are unlikely to be very small (this lemma is stated in slightly more general form than we need, as it will also be applied again later in the paper). It is proved by a simple application of Chebyshev's inequality. Given $f\in \mb N$ and a set $V$, an \emph{ordered $f$-subset} of $V$ 
is a sequence of $f$ distinct elements of $V$. We say that two ordered $f$-subsets $\vec X,\vec Y$ of $V$ are \emph{disjoint} if there is no element of $V$ that appears in both $\vec X$ and $\vec Y$.
\begin{lemma}\label{lem:concentration-sequences}
Consider $f,m\in\mb N$ and a set $V$, with $2f\le mf\le |V|$.
For each $i\in [m]$, let $\mathcal{F}_{i}$ be a collection
of at least $\gamma |V|^{f}$ ordered $f$-subsets of $V$, and let $\vec X_{1},\dots,\vec X_{m}$
be uniformly random pairwise disjoint
ordered $f$-subsets of $V$. Let $N$ be the number
of $i$ such that $\vec X_{i}\in\mathcal{F}_{i}$. Then
\[
\Pr[N<\gamma m/2]\lesssim_{f}\frac{1}{\gamma m}.
\]
\end{lemma}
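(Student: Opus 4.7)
The plan is to apply Chebyshev's inequality to $N$. I first record the mean. Write $p_i:=\Pr[\vec X_i\in \mc F_i]$. Since the joint law of $(\vec X_1,\dots,\vec X_m)$ is invariant under permutations of $V$, each $\vec X_i$ is individually uniform on the $(|V|)_f$ ordered $f$-subsets of $V$. Hence $p_i=|\mc F_i|/(|V|)_f \ge \gamma|V|^f/(|V|)_f \ge \gamma$, giving $\mb E[N]=\sum_{i}p_i \ge \gamma m$.

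Next I would bound the variance by splitting diagonal and off-diagonal:
\[
\on{Var}[N]=\sum_{i}p_i(1-p_i)+\sum_{i\neq j}\on{Cov}(\mathbbm 1[\vec X_i\in \mc F_i],\mathbbm 1[\vec X_j\in \mc F_j]) \le \mb E[N]+\sum_{i\neq j}\on{Cov}_{ij}.
\]
For the covariances, again by exchangeability the pair $(\vec X_i,\vec X_j)$ is uniform over the $(|V|)_{2f}$ ordered pairs of disjoint ordered $f$-subsets, so
\[
\Pr[\vec X_i\in \mc F_i,\ \vec X_j\in \mc F_j] \;\le\; \frac{|\mc F_i|\,|\mc F_j|}{(|V|)_{2f}} \;=\; p_ip_j\cdot \frac{(|V|)_f}{(|V|-f)_f}.
\]
The last ratio equals $\prod_{\ell=0}^{f-1}\bigl(1+f/(|V|-f-\ell)\bigr)$, and using the hypothesis $mf\le |V|$ (so $|V|-2f+1\gtrsim_f |V|$ once $m$ exceeds a constant depending on $f$) I would check that this ratio equals $1+O_f(f^2/|V|)$. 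Therefore $\on{Cov}_{ij}\lesssim_f p_ip_j\,f^2/|V|$, and summing, together with $mf\le |V|$ and $\mb E[N]\le m$,
\[
\sum_{i\neq j}\on{Cov}_{ij} \;\lesssim_f\; \frac{f^2\,\mb E[N]^2}{|V|} \;\lesssim_f\; \frac{f^2 m\,\mb E[N]}{|V|} \;\lesssim_f\; \mb E[N].
\]
Combining, $\on{Var}[N]\lesssim_f \mb E[N]$.

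Finally, since $\mb E[N]\ge \gamma m$, the event $\{N<\gamma m/2\}$ lies inside $\{|N-\mb E[N]|\ge \mb E[N]/2\}$, and Chebyshev yields
\[
\Pr[N<\gamma m/2] \;\le\; \frac{4\on{Var}[N]}{\mb E[N]^2} \;\lesssim_f\; \frac{1}{\mb E[N]} \;\le\; \frac{1}{\gamma m}.
\]
The main obstacle is the covariance estimate: one has to quantify precisely how much the pairwise-disjointness constraint couples the $\vec X_i$'s, and it is exactly the hypothesis $mf\le|V|$ that forces the ratio $(|V|)_f/(|V|-f)_f$ to be $1+O_f(1/|V|)$. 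The degenerate range in which $m$ is bounded in terms of $f$ (where this ratio estimate is not sharp) is harmless: there $1/(\gamma m) \gtrsim_f 1$, and the conclusion follows from the trivial bound $\Pr[\,\cdot\,]\le 1$.
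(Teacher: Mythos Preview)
Your proposal is correct and follows essentially the same route as the paper: compute $\mb E[N]\ge\gamma m$ from uniformity of each $\vec X_i$, bound each off-diagonal covariance by comparing $(|V|)_f^2$ with $(|V|)_{2f}$, and finish with Chebyshev. The only cosmetic difference is that the paper keeps the two contributions to $\on{Var}[N]$ separate (obtaining $\on{Var}[N]\lesssim_f \mb E[N]+\mb E[N]^2/n$ and hence $\Pr[N<\gamma m/2]\lesssim_f 1/\mb E[N]+1/n$), whereas you absorb the second term into the first using $\mb E[N]\le m\le |V|/f$; both lead to the same conclusion.
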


\begin{proof}
Let $\one_{i}$ be the indicator random variable for the event that
$\vec X_{i}\in\mathcal{F}_{i}$, and write $n=|V|$. By linearity of expectation, 
$$ \mb E[N]= \sum_{i=1}^m \mb E[\one_{i}]=
 \sum_{i=1}^m\frac{|\mathcal{F}_{i}|}{n(n-1)\dots(n-f+1)}\ge
\sum_{i=1}^m \frac{|\mathcal{F}_{i}|}{n^{f}}\ge\gamma m.$$
Then, for all $i,j \in [m]$ with $i\neq j$, we have
\begin{align*}
\operatorname{Cov}[\one_{i},\one_{j}] & \le\frac{|\mathcal{F}_{i}|\cdot|\mathcal{F}_{j}|}{n(n-1)\dots(n-2f+1)}-\left(\frac{|\mathcal{F}_{i}|}{n(n-1)\dots(n-f+1)}\right)\left(\frac{|\mathcal{F}_{j}|}{n(n-1)\dots(n-f+1)}\right)\\
 & \lesssim_{f}\frac{|\mathcal{F}_{i}|}{n^{f}}\cdot\frac{|\mathcal{F}_{j}|}{n^{f}}\cdot\frac{1}{n},\\
\operatorname{Var}[N] & \le\sum_{i}\mb E[\one_{i}]+\sum_{i\ne j}\operatorname{Cov}[\one_{i},\one_{j}]\\
 & \lesssim_{f}\sum_{i}\frac{|\mathcal{F}_{i}|}{n^{f}}+\frac{1}{n}\cdot\left(\sum_{i}\frac{|\mathcal{F}_{i}|}{n^{f}}\right)^{2}\le\mb E[N]+\frac{\mb E[N]^{2}}{n}.
\end{align*}
By Chebyshev's inequality, we deduce the desired result
\[
\Pr[N<\gamma m/2]\le\Pr\big[N<\mb E[N]/2\big]\le\frac{\operatorname{Var}[N]}{(\mb E[N]/2)^{2}}\lesssim_{f}\frac{1}{\mb E[N]}+\frac{1}{n}\lesssim\frac{1}{\gamma m}.\qedhere
\]
\end{proof}

Given the conclusion of \cref{lem:BS}, we can apply \cref{lem:concentration-sequences} in the setting of \cref{lem:poly-coupling}, as follows.

\begin{lemma}\label{lem:BS-subsampling}
Consider $r,k\in\mb N$ satisfying $k\ge r$, and let $n=2k$. Consider an $r$-uniform hypergraph $G$ on the vertex set $[n]$, and its corresponding polynomial $\lambda_G$. Suppose
$Q_{s}(G)\ge\beta n^{r+s}$ for some $\beta>0$ and $s\in [r]$. Recall the coefficients $A_{\vec{v}}(I)$ from \cref{lem:poly-coupling} (defined in terms of a random sequence $\vec v$). 

Then, except with probability at most $O_r(1/(\beta n))$ over the randomness of $\vec{v}$, the following holds: there exist $t \gtrsim_r \beta n$ disjoint $s$-sets $I_1,\dots, I_t \subseteq [k]$ such that $|A_{\vec{v}}(I_j)|\gtrsim_{r}\beta n^{r-s}$ for each $j$.
\end{lemma}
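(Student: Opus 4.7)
The plan is to interpret $Q_s(G)$ as (a constant multiple of) the $\ell_1$ mass of a certain function $A(\cdot)$ on ordered $2s$-subsets of $[n]$, use a simple averaging argument to find many ``good'' ordered $2s$-subsets on which $|A|$ is comparable to its average, and then apply \cref{lem:concentration-sequences} to show that a random partition of $\vec v$ into consecutive blocks of $s$ pairs produces many good blocks.

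First, since $n=2k$ we have $V_{\vec v}=[n]$, so for any $s$-subset $I\subseteq[k]$ the coefficient $A_{\vec v}(I)$ depends only on the ordered $2s$-subset of $[n]$ formed by concatenating the pairs $(v_i(-1),v_i(1))$ for $i\in I$ (in, say, increasing order of $i$). For an ordered $2s$-subset $\vec x=(x_1(-1),x_1(1),\dots,x_s(-1),x_s(1))$ of $[n]$, set
\[
A(\vec x):=2^{-r}\sum_W(-1)^{|W\cap\{x_i(-1):i\in[s]\}|}\wh G(W),
\]
with the sum over $W\in\binom{[n]}{r}$ satisfying $|W\cap\{x_i(-1),x_i(1)\}|=1$ for all $i\in[s]$. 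Then $Q_s(G)=2^r\sum_{\vec x}|A(\vec x)|$, and if we write $\vec x_{\vec v,I}$ for the ordered $2s$-subset associated to $I$, then $|A(\vec x_{\vec v,I})|=|A_{\vec v}(I)|$ (this uses that $\wh{\lambda_G}$ is supported on size-$r$ sets and that $V_{\vec v}=[n]$). Combining the hypothesis $\sum_{\vec x}|A(\vec x)|\gtrsim_r \beta n^{r+s}$ with the trivial bound $|A(\vec x)|\le 2^{-r}\cdot 2^s\binom{n-2s}{r-s}\lesssim_r n^{r-s}$, a standard averaging argument produces a set $\mc F$ of ``good'' ordered $2s$-subsets satisfying $|A(\vec x)|\ge c_r\beta n^{r-s}$, with $|\mc F|\gtrsim_r \beta n^{2s}$ for an appropriate constant $c_r>0$.

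Next, partition the pairs of $\vec v$ into $m:=\lfloor k/s\rfloor$ consecutive blocks: set $I_i:=\{(i-1)s+1,\dots,is\}\subseteq[k]$ and $\vec X_i:=\vec x_{\vec v,I_i}$ for $i\in[m]$. Since $\vec v$ is uniform and $V_{\vec v}=[n]$, the tuple $(\vec X_1,\dots,\vec X_m)$ is distributed as $m$ uniformly random pairwise disjoint ordered $2s$-subsets of $[n]$. Applying \cref{lem:concentration-sequences} with $f=2s$, $\mc F_i=\mc F$, and $\gamma=\Omega_r(\beta)$ (which is justified since $|\mc F|/n^{2s}\gtrsim_r \beta$), we conclude that except on an event of probability $O_r(1/(\gamma m))=O_r(1/(\beta n))$, at least $\gamma m/2\gtrsim_r \beta n$ of the $\vec X_i$'s lie in $\mc F$, producing disjoint $s$-sets $I_i\subseteq[k]$ with $|A_{\vec v}(I_i)|=|A(\vec X_i)|\ge c_r\beta n^{r-s}$, as required.

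The argument does not face a substantive obstacle: essentially everything follows once one identifies $A_{\vec v}(I)$ with the ``$\vec v$-independent'' quantity $A(\vec x_{\vec v,I})$, which is clean because $n=2k$ makes the restriction $W\subseteq V_{\vec v}$ in \cref{lem:poly-coupling} vacuous. The only small bookkeeping item is the edge case $k<2s$, in which the condition $2f\le mf$ of \cref{lem:concentration-sequences} fails. In this regime, however, $n\le 4r$, so $\beta n\lesssim_r 1$, and the claimed failure probability $O_r(1/(\beta n))$ may be taken to be at least $1$ by absorbing the implicit constant, making the conclusion vacuous.
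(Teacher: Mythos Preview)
Your proof is correct and follows essentially the same route as the paper: define the quantity $a(\vec x)$ (your $2^r|A(\vec x)|$), use the pointwise bound $a(\vec x)\lesssim_r n^{r-s}$ together with $\sum_{\vec x}a(\vec x)=Q_s(G)\ge\beta n^{r+s}$ to extract a set $\mc F$ of $\Omega_r(\beta n^{2s})$ good ordered $2s$-subsets, partition the pairs of $\vec v$ into consecutive blocks, and apply \cref{lem:concentration-sequences}. Your explicit observation that $V_{\vec v}=[n]$ (so the restriction $W\subseteq V_{\vec v}$ is vacuous) and your treatment of the degenerate case $k<2s$ are minor clarifications the paper leaves implicit.
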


\begin{proof}
For a sequence
$\vec{x}=(x_{1}(-1),x_{1}(1),\dots, x_{s}(-1),x_{s}(1))$
of $2s$ distinct indices, let 
\[
a(\vec{x})=\left|\sum_{W}(-1)^{|W\cap\{x_{1}(-1),\dots,x_{s}(-1)\}}\wh{G}(W)\right|
\]
where the sum is over all $W \in \binom{[n]}{r}$ which satisfy
$|W\cap\{x_{i}(-1),x_{i}(1)\}|=1$ for every $i\in [s]$. By definition, we have
$Q_{s}(G)=\sum_{\vec{x}}a(\vec{x})$.
Note that we always have $a(\vec{x})\le2^{s}n^{r-s}$, so 
it must be the case that $a(\vec{x})\gtrsim_{s}\beta n^{r-s}$
for at least $\Omega_{s}(\beta n^{2s})$ different $\vec{x}$. Denote the set of all such $\vec{x}$ by $\mc{F}$.

Now, let $m = \floor{k/s}\gtrsim_s n$. Recall the random sequence $\vec v=(v_{1}(-1),v_{1}(1),\dots,v_{k}(-1),v_{k}(1))$ from \cref{def:slice-coupling}, and for each $j\in [m]$, let
\begin{align*}I_j&=\{s(j-1)+1,s(j-1)+2,\dots,sj\},\\ \vec X_j&=\big(v_{s(j-1)+1}(-1),v_{s(j-1)+1}(1),\dots,v_{sj}(-1),v_{sj}(1)\big).\end{align*}
So, in the language of \cref{lem:concentration-sequences}, $\vec X_1,\dots,\vec X_m$ are uniformly random disjoint ordered $2s$-subsets of $[n]$. If $\vec X_j\in \mc F$, then
$|A_{\vec v}(I_j)|=2^{-r}a(\vec{X}_j)\gtrsim_r \beta n^{r-s}$. So, the desired result follows from \cref{lem:concentration-sequences}, with $f=2s$ and 
$\gamma\gtrsim_r \beta$ and $\mathcal F_j=\mathcal F$ for all $j\in [m]$.
\end{proof}
Now, it is straightforward to deduce \cref{conj:dense-LO}.

\begin{proof}[Proof of \cref{conj:dense-LO}]
By \cref{lem:BS} and \cref{lem:BS-subsampling}, there is $s\in [r]$ and $t \gtrsim_r \beta n$ such that, except with probability $O_r(1/(\beta n))\lesssim \sup_{f\le d}\on{LO}_f(\beta n)$ over the randomness of $\vec{v}$, the following holds: there are at least $t$ disjoint $s$-sets $I_1,\dots, I_t \subseteq [k]$ such that $|A_{\vec{v}}(I_j)|> r2^r t n^{r-s-1}$ for each $j$. Condition on such an outcome of $\vec v$.

Recalling \cref{rmk:coeff-bound}, we have $|A_{\vec v}(I)|\le 2^r n^{r-f}=:b_f$ for all $I \in \binom{[k]}{f}$. So, in the notation of \cref{cor:poly-LO}, we have \[|A_{\vec{v}}(I_j)|>r2^r t n^{r-s-1}\ge a(s,t)\] for each $j \in [t]$. We can therefore apply \cref{cor:poly-LO} using the randomness of $\vec \xi$ to obtain the desired result.
\end{proof}

\section{A Littlewood--Offord-type inequality for ``sparse'' polynomials on the slice}\label{sec:sparse-LO}
For the proof of \cref{conj:1/e} we will need one further application of polynomial Littlewood--Offord bounds. Here we consider a degree-$d$ polynomial $\lambda$ which is ``sparse'' (almost all its top-degree coefficients are zero), and give a bound in terms of the matching number of the hypergraph of nonzero degree-$d$ coefficients of $\lambda$. This matching number is \emph{a priori} quite different to the matching number of the polynomial obtained from \cref{lem:poly-coupling}; we take advantage of our sparseness assumption to relate the two (this is actually a somewhat delicate matter, and requires a careful minimality argument).

Recall that the notation $\alpha \gg \beta_1,\dots,\beta_q$ means ``$\alpha$ is sufficiently large in terms of $\beta_1,\dots,\beta_q$''.
\begin{lemma}\label{lem:sparse-LO}
For $d,q\in\mb N$, there is $\delta>0$ such that the following holds.
Consider $R,k,m,n\in\mb N$ with $2k\le n\le Rk$ and let $\lambda\in\mb R[x_{1},\dots,x_{n}]$
be an $n$-variable multilinear polynomial with degree at most $d$, whose coefficients
all lie in $\{0,\dots,q\}$. Suppose $\nu(H^{(d)}_0(\lambda))\ge m$ and suppose
that $\lambda$ has at most $\delta n^{d}$ nonzero degree-$d$ terms.
Then, for $\vec{\sigma}\sim\operatorname{Slice}(n,k)$, provided that $k \gg d,q,R$, we have 
\[
\sup_{\ell\in\mb R}\Pr[\lambda(\vec{\sigma})=\ell]\lesssim_{d,R}\sup_{f\le d}\operatorname{LO}_{f}(\Omega_{R,d,q}(m))\lesssim_{d,R,q} \frac 1{m^{1/3}}.
\]
\end{lemma}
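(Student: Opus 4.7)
My plan is to deduce the lemma from the polynomial Littlewood--Offord bound \cref{cor:poly-LO} applied to the coupled polynomial from \cref{lem:poly-coupling}. Writing $\vec\sigma=\vec\sigma_{\vec v,\vec\xi}$,
\[
\lambda(\vec\sigma)=Q_{\vec v}(\vec\xi):=\sum_{I\subseteq[k]}A_{\vec v}(I)\,\vec\xi^{\,I}
\]
is a multilinear polynomial in $\vec\xi$ of degree at most $d$ whose coefficients are controlled by \cref{rmk:coeff-bound}. I aim to show that, with probability at least $1-O(m^{-1/3})$ over the randomness of $\vec v$, the degree-$d$ hypergraph $H_0^{(d)}(Q_{\vec v})$ has matching number $\gtrsim_{R,d} m$. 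Once this ``matching lift'' is in hand, \cref{cor:poly-LO} applied with $f=d$ (so that $a(d,t)=0$, sidestepping any need for bounds on lower-degree coefficients) together with \cref{thm:poly-LO} controls the conditional sup probability; averaging over $\vec v$ and using the trivial bound $1$ on the complementary event then yields the lemma.

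For the matching lift, fix a matching $M=\{W_1,\dots,W_m\}$ in $H_0^{(d)}(\lambda)$ and write $V^*=W_1\cup\dots\cup W_m$. I will sample $\vec v$ in a ``planted'' fashion: allocate a block of $d$ consecutive pairs to each $W_j$ (so its vertices occupy $d$ distinct pairs indexed by $I_j\subseteq[k]$), draw the corresponding partners uniformly without replacement from $[n]\setminus V^*$, and then fill the remaining $k-dm$ pairs uniformly from the leftover vertices. Because $\lambda$ has degree at most $d$, the only sets in $\mc{W}_{\vec v}(I_j)$ contributing to $A_{\vec v}(I_j)$ are the $2^d$ ``alternatives'' of $W_j$ obtained by swapping an arbitrary subset of $W_j$'s vertices with their partners. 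When $W_j$ is the \emph{unique} alternative with nonzero $\wh\lambda$-coefficient, we have $A_{\vec v}(I_j)=\pm 2^{-d}\wh\lambda(W_j)\ne 0$, so $I_j\in E(H_0^{(d)}(Q_{\vec v}))$; and since the sets $I_j$ are automatically disjoint for distinct $j$, collecting such ``good'' indices yields a matching in $H_0^{(d)}(Q_{\vec v})$ of the same size.

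The crux---and the main obstacle I expect---is to show that a constant fraction of the $j$'s are good with good probability. A direct first-moment calculation over the random partners is inadequate here: if some $(d-1)$-subset $T\subseteq W_j$ has large codegree in $H_0^{(d)}(\lambda)$, then with high probability one of $W_j$'s partners is paired with a neighbor of $T$, producing a nonzero distance-$1$ alternative. To rule out such degenerate configurations I plan to invoke a minimality argument leveraging the sparsity $|E(H_0^{(d)}(\lambda))|\le \delta n^d$ with $\delta$ chosen small in terms of $d,q$. Among all counterexamples to the lemma (polynomials satisfying the hypotheses but violating the conclusion with the largest quantitative gap), choose one minimizing the number of nonzero degree-$d$ monomials; under this minimality, no $(d-1)$-subset of a matching edge can have codegree exceeding a threshold $C(d,q)$, for otherwise one could reroute the matching through a neighbor of $T$ and delete degree-$d$ monomials of $\lambda$ in a way that preserves the hypotheses and the anticoncentration violation, contradicting minimality. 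The subtlety---and the part requiring real care---is ensuring that the modification really does preserve the anticoncentration violation, since removing even a single degree-$d$ coefficient alters $\lambda(\vec\sigma)$ on an $\Omega_{R,d}(1)$-fraction of the slice. Once bounded codegrees are secured, the expected number of bad $j$'s is $\lesssim_{d,q,R} 1$ per $j$, and a Chebyshev second-moment estimate in the spirit of \cref{lem:concentration-sequences} delivers $\nu(H_0^{(d)}(Q_{\vec v}))\gtrsim_{R,d} m$ with probability at least $1-O(m^{-1/3})$.

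Combining everything: on the good event, \cref{cor:poly-LO} applied with $f=d$ and $t\gtrsim_{R,d} m$ gives $\sup_\ell\Pr[Q_{\vec v}(\vec\xi)=\ell\mid\vec v]\lesssim_d\sup_{f\le d}\on{LO}_f(\Omega_{R,d}(m))$. Averaging over $\vec v$ and combining with \cref{thm:poly-LO} (noting that $m^{-1/2+o(1)}\lesssim m^{-1/3}$ for $m\gg_d 1$) yields $\sup_\ell\Pr[\lambda(\vec\sigma)=\ell]\lesssim_{d,R} m^{-1/3}$, as required.
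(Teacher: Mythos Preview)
Your high-level strategy---couple $\vec\sigma$ to $\vec\xi$ via \cref{lem:poly-coupling}, exhibit a large matching in some $H^{(f)}_0(Q_{\vec v})$, and invoke \cref{cor:poly-LO}---is exactly the paper's route. The gap is in how you produce that matching.

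Your minimal-counterexample argument does not work, and you have correctly identified the reason yourself: deleting degree-$d$ monomials from $\lambda$ changes $\lambda(\vec\sigma)$ on a constant fraction of the slice, so there is no mechanism by which an anticoncentration violation for $\lambda$ transfers to the modified polynomial. This is not a detail to be patched; it is the entire content of the step. Moreover, even if one could somehow force bounded codegrees, your plan of staying at the top level $f=d$ is doomed in general. Consider $d=2$, $q=2$, with matching edges $\{2j-1,2j\}$ of coefficient $2$ and ``star'' edges $\{i,v\}$ of coefficient $1$ for every $i\in[2m]$ and $v>2m$ (this has $O(mn)\le\delta n^{2}$ edges). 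Under your planted coupling the four alternatives of $\{2j-1,2j\}$ always contribute $2-1-1+0=0$, so $A_{\vec v}(I_j)=0$ for every $j$, and $H^{(2)}_0(Q_{\vec v})$ sees none of your planted edges. No codegree bound on a minimal counterexample will rescue this: the cancellation is forced by the structure, not by an extremal choice of $\lambda$.

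The paper replaces your external minimality with an \emph{internal} one. After conditioning on $\vec v(1)$ and $V_{\vec v}$ (using \cref{lem:matching-subsample} to retain a matching $E_1,\dots,E_{m'}$ inside $\{v_1(1),\dots,v_k(1)\}$), it defines $B(W)$ as the number of nonzero degree-$d$ coefficients $\wh\lambda(Z)$ with $W\subseteq Z\subseteq V_{\vec v}$, and for each $j$ takes a \emph{minimal} $F_j\subseteq E_j$ with $B(F_j)\ge(n/M)^{d-|F_j|}$. Sparsity ($B(\emptyset)\le\delta n^{d}$ with $\delta<M^{-d}$) forces $F_j\ne\emptyset$; the level $d'=|F_j|$ may be strictly less than $d$. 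The minimality of $F_j$ gives, for every proper $Y\subsetneq F_j$, the upper bound $B(Y)<(\varepsilon/M)n^{d-|Y|}$, which (via Markov over the still-random partners $v_i(-1)$) shows that with probability $\Omega(1)$ the ``swap'' sets $T$ have $B(T)$ small, so the signed sum defining $A_{\vec v}(I_j)$ is dominated by the $\ge(n/M)^{d-d'}$ contributions from $Z\supseteq F_j$ and satisfies $|A_{\vec v}(I_j)|\gtrsim_{d,M} n^{d-d'}$. This is large enough to beat the threshold $a(d',t)$ in \cref{cor:poly-LO} (which is why the argument must work at level $d'$, not $d$). A second-moment bound (\cref{lem:concentration-sequences}) then converts the per-$j$ probability into a matching of size $\Omega(m)$. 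In the example above this mechanism correctly drops to $d'=1$.

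A smaller issue: your ``planted'' sampling of $\vec v$ is not the distribution in \cref{def:slice-coupling}. The paper avoids this by conditioning on $\vec v(1)$ and $V_{\vec v}$ and using only the residual randomness of the permutation $v_1(-1),\dots,v_k(-1)$; you should do the same rather than redefining the coupling.
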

For our proof of \cref{lem:sparse-LO} we use the following simple lemma, showing that if a graph $G$ has a large matching, then a random subset of its vertices also typically has a large matching.
\begin{lemma}\label{lem:matching-subsample}
Let $G$ be a $d$-uniform hypergraph whose number of vertices is between $2k$ and $Rk$, and which satisfies $\nu(G)\ge m$. Let $U$ be a uniformly random $k$-vertex subset, for some $k\ge 2d$. Then, except with probability $O_{d,R}(1/m)$, we have $\nu(G[U])\gtrsim_{d,R} m$.
\end{lemma}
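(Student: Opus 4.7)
The plan is to apply the second moment method to the number of edges of a fixed maximum matching of $G$ that survive in $G[U]$. Specifically, I would fix a matching $M=\{e_1,\dots,e_t\}$ in $G$ with $t=\nu(G)\ge m$, set $X_i=\one[e_i\subseteq U]$, and let $X=\sum_{i=1}^t X_i$. Since the $e_i$ are pairwise disjoint, the edges counted by $X$ form a matching in $G[U]$, so $\nu(G[U])\ge X$; it therefore suffices to show that $X\gtrsim_{d,R}m$ except with probability $O_{d,R}(1/m)$.

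The first moment is straightforward: each indicator satisfies $\mb E[X_i]=\binom{n-d}{k-d}/\binom{n}{k}=\prod_{j=0}^{d-1}(k-j)/(n-j)$, and using $k\ge 2d$ together with $n\le Rk$ this is at least $(2R)^{-d}$, so $\mb E[X]\gtrsim_{d,R}m$. For the variance, the key observation is that the $X_i$ are pairwise non-positively correlated under uniform sampling without replacement: for any disjoint pair $e_i,e_j$, a direct calculation gives
\[
\frac{\mb E[X_i X_j]}{\mb E[X_i]\mb E[X_j]}=\prod_{l=0}^{d-1}\left(1-\frac{d(n-k)}{(k-l)(n-d-l)}\right)\le 1,
\]
since $n\ge k$. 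Hence $\operatorname{Var}[X]\le\sum_i\mb E[X_i]=\mb E[X]$, and Chebyshev's inequality yields $\Pr[X<\mb E[X]/2]\le 4\operatorname{Var}[X]/\mb E[X]^2\lesssim_{d,R}1/m$; on the complement of this event, $\nu(G[U])\ge X\ge\mb E[X]/2\gtrsim_{d,R}m$, as required.

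I do not anticipate a real obstacle here: the negative correlation of disjoint-containment events under uniform sampling without replacement is a standard phenomenon, and the remainder is a routine Chebyshev bound applied to a sum of indicators indexed by a maximum matching of $G$. (As an alternative to verifying the covariance bound by hand, one could also reformulate the problem in the language of \cref{lem:concentration-sequences} by sampling the vertices of $U$ as a sequence of disjoint ordered $d$-subsets of $[n]$ and taking $\mc F_i$ to be the set of orderings of $e_i$.)
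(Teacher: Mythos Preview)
Your proposal is correct and follows essentially the same second-moment approach as the paper: fix a matching, count surviving edges via indicator variables, and apply Chebyshev. The only difference is that you verify the covariances are non-positive (which the paper itself notes in a footnote is possible, referring to the calculation in \cref{lem:cheap-variance}), whereas the paper's written proof uses the cruder bound $\operatorname{Cov}[\one_i,\one_j]\lesssim_{d,R}1/k$ and then invokes $m\lesssim_R k$ to control the $m^2/k$ term.
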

\begin{proof}
Let $n$ be the number of vertices in $G$.
First, note that we have the trivial upper bound $m\le n/d\lesssim_R k$. By shrinking $m$ if necessary, note that we can assume $k/m\gg q,\delta,d,R$ (we would only shrink $m$ to a value of the form $\Omega_{R,d,q}(m)$, so the form of the final bound would be unchanged).

Let $E_1,\dots,E_m$ be the edges of a matching in $G$. For each $i\in [m]$, let $\one_i$ be the indicator random variable for the event $E_i\subseteq U$, and let $N=\one_1+\dots+\one_m$. 
Then, with similar calculations as in \cref{lem:concentration-sequences} we have\footnote{Actually, with slightly more careful calculations one can see that $\on{Cov}[ \one_i,\one_j]\le 0$ (this is essentially done in the proof of \cref{lem:cheap-variance}).}
\[\mb E[\one_i]=\frac{\binom {n-d}{k-d}}{\binom {n}k}\gtrsim_{d,R}1,\quad \on{Cov}[ \one_i,\one_j]=\frac{\binom {n-2d}{k-2d}}{\binom {n}k}-\left(\frac{\binom {n-d}{k-d}}{\binom {n}k}\right)^2\lesssim_{d,R} 1/k,\]
and therefore 
\[\mb E[N]\gtrsim_{d,R}m,\quad \on{Var}[N]\lesssim_{d,R}m+m^2/k\lesssim_R m\]
(using that $m\le (Rk)/d\lesssim_R k$). The desired result follow from Chebyshev's inequality.
\end{proof}
Now, we prove \cref{lem:sparse-LO}. 
\begin{proof}[Proof of \cref{lem:sparse-LO}]
Recall the coefficients $A_{\vec{v}}(I)$ from \cref{lem:poly-coupling}, defined in terms of a random sequence $\vec v=(v_{1}(-1),v_{1}(1),\dots,v_{k}(-1),v_{k}(1))$. Let $\vec v(1)=(v_1(1),\dots,v_k(1))$. By \cref{lem:matching-subsample}, over the randomness of $\vec v(1)$, except with probability $O_{d,R}(1/m)\lesssim_{d,R} \on{LO}_d(m)$, the subgraph of $H_0^{(d)}(\lambda)$ induced by the vertices of $\vec v(1)$ has a matching of size $m' = \Omega_{R,d}(m)$. 
Condition on such an outcome of $\vec v(1)$, and let $E_1,\dots,E_{m'}$ be the edges of this matching. Given such a choice of $\vec{v}(1)$, condition on an arbitrary outcome of $V_{\vec v}$ such that $V_{\vec{v}} \supseteq \{v_1(1),\dots, v_k(1)\}$. Note that there is still some randomness remaining in $\vec v$: namely, $v_1(-1),\dots,v_k(-1)$ is a uniformly random ordering of the vertices in $U(-1):=\{v_1(-1),\dots,v_k(-1)\}$.

For a set $W\subseteq V_{\vec v}$, let $B(W)$ be the number of nonzero coefficients
$\wh{\lambda}(Z)$ of $\lambda$, among all size-$d$ sets $Z$ satisfying $W\subseteq Z\subseteq V_{\vec v}$. By assumption, \begin{equation}
B(\emptyset)\le\delta n^{d}. \label{eq:B(0)}
\end{equation}
Consider $M \gg \delta, q,d$. For all $j \in [m']$, we have 
$B(E_{j})=1=(n/M)^{d-|E_{j}|}$, so we may define $F_{j}\subseteq E_{j}$
to be a \emph{minimal} subset of $E_{j}$ satisfying $B(F_{j})\ge(n/M)^{d-|F_{j}|}$.
Recalling \cref{eq:B(0)}, and assuming $\delta < M^{-d}$, we have $F_{j}\ne\emptyset$. Assume without loss of generality that all the sets $F_1,\dots,F_{m'/d}$ have the same size $d' \in [d]$.

Next, for each $j\in [m'/d]$, let $I_j=\{i:v_i(1)\in F_j\}\subseteq[k]$. For each $j\in [m'/d]$, we have   $|I_j|=|F_j|=d'$, since $F_j\subseteq E_j\subseteq \{v_1(1),\ldots,v_k(1)\}$. Define the sequence $\vec X_j=(v_i(-1):i\in I_j)$ and its underlying set $X_j=\{v_i(-1):i\in I_j\}$. 
Then, note that each coefficient $A_{\vec v}(I_j)$ depends only on the randomness of $\vec X_j$. Recall that since we have conditioned on a choice of the set $U(-1)$, the number of possibilities for $\vec{X}_j$ is $k(k-1)\cdots (k-d'+1)\leq k^{d'}$.
\begin{claim*}
Let $M \gg q,d,R$ and $\alpha\ll q,M,R$. For each $j\in[m'/d]$, let $\mc F_j$ be the collection of all possible outcomes of $\vec X_j$ which satisfy $A_{\vec v}(I_j)\ge \alpha n^{d-d'}$. Then $|\mc F_j| \gtrsim k^{d'}$.
\end{claim*}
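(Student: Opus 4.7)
The plan is to decompose $A_{\vec v}(I_j)$ into a main term (independent of the bijection) plus error terms, and then show that the main term dominates for a positive fraction of outcomes of $\vec X_j$. Every $W \in \mc W_{\vec v}(I_j)$ is parameterized by a subset $R \subseteq I_j$ (indicating which pairs $i$ contribute their $v_i(-1)$ element to $W$) together with $W_1 \subseteq V_{\vec v} \setminus \{v_i(-1), v_i(1) : i \in I_j\}$ of size at most $d - d'$. Setting $W_0(R) := \{v_i(-1) : i \in R\} \cup \{v_i(1) : i \in I_j \setminus R\}$, this gives
\[
A_{\vec v}(I_j) = \sum_{R \subseteq I_j}(-1)^{|R|} C_R, \qquad C_R := \sum_{W_1}\wh\lambda(W_0(R) \cup W_1)\, 2^{-d'-|W_1|} \geq 0,
\]
where non-negativity of $C_R$ follows from $\wh\lambda \in \{0, 1, \dots, q\}$.

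For the main term $R = \emptyset$, $W_0 = F_j$ does not depend on $\vec X_j$. Discarding all but the top-degree part yields $C_\emptyset \geq 2^{-d}\bigl(B(F_j) - \sum_{y \in X_j} B(F_j \cup \{y\})\bigr)$, where the correction accounts for degree-$d$ monomials $\wh\lambda(F_j \cup W_1)$ whose $W_1$ accidentally intersects $X_j$. Since the average of $B(F_j \cup \{y\})$ over $y \in V_{\vec v} \setminus F_j$ is $(d-d')B(F_j)/(2k-d') \lesssim_d B(F_j)/k$, Markov's inequality gives that this correction is at most $B(F_j)/4$ except with probability $O_d(1/k)$. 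For each error term $R \neq \emptyset$, I decompose $W_0(R) = S_R \cup T_R$ with $S_R := \{v_i(-1) : i \in R\} \subseteq X_j$ and $T_R := \{v_i(1) : i \in I_j \setminus R\} \subsetneq F_j$; using $\wh\lambda \leq q$, we obtain $|C_R| \leq q\, 2^{-d}\, B(W_0(R)) + O_{d, q}(k^{d-d'-1})$, where the last term absorbs all lower-degree contributions. As $\vec X_j$ varies, $S_R$ is uniform over $|R|$-subsets of $U(-1)$, so a direct calculation gives $\mb E\, B(W_0(R)) \lesssim_d B(T_R)/k^{|R|}$. The key point is that minimality of $F_j$ applies to the proper subset $T_R \subsetneq F_j$, yielding $B(T_R) < (n/M)^{d - |T_R|}$. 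Combined with $n \leq Rk$, this gives $\mb E\, B(W_0(R)) \lesssim_d (n/M)^{d-d'} (R/M)^{|R|}$. Summing over $R \neq \emptyset$ and choosing $M$ sufficiently large in terms of $q, d, R$, Markov's inequality yields $q \sum_{R \neq \emptyset} B(W_0(R)) \leq B(F_j)/4$ with probability at least $9/10$.

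By a union bound, the two good events hold simultaneously with probability $\geq 1/2$ over $\vec X_j$. When they do,
\[
A_{\vec v}(I_j) \geq C_\emptyset - \sum_{R \neq \emptyset} C_R \geq 2^{-d-1} B(F_j) - O_{d, q}(k^{d-d'-1}) \geq 2^{-d-2}(n/M)^{d-d'} \geq \alpha n^{d-d'}
\]
for $\alpha := 2^{-d-2} M^{-d}$, using $B(F_j) \geq (n/M)^{d-d'}$ and assuming $k$ is large enough in terms of $q, d, M, R$. Hence $|\mc F_j| \geq \tfrac{1}{2} k(k-1) \cdots (k-d'+1) \gtrsim k^{d'}$, as desired. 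The main obstacle is the error-term analysis: the crucial observation is that for every $R \neq \emptyset$, $T_R$ is a \emph{proper} subset of $F_j$ (even though $W_0(R)$ itself leaves $F_j$), so the minimality hypothesis produces the factor $(R/M)^{|R|}$ that makes the sum over all $2^{d'}-1$ nontrivial choices of $R$ geometrically convergent once $M$ is large in terms of $q, d, R$.
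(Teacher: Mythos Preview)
Your proof is correct and follows essentially the same approach as the paper: both pivot on applying the minimality of $F_j$ to the proper subsets $T_R\subsetneq F_j$ (the paper's $Y$) and then conclude via a Markov/averaging argument that a positive fraction of outcomes of $\vec X_j$ give $A_{\vec v}(I_j)\gtrsim_{d,M} n^{d-d'}$. Your organisation differs only cosmetically---you apply Markov once to $\sum_{R\ne\emptyset}B(W_0(R))$ rather than to each $B(T)$ separately before union-bounding, and you track the lower-degree $O_{d,q}(k^{d-d'-1})$ error explicitly---but do watch the notational collision between your subset $R\subseteq I_j$ and the parameter $R$ in $n\le Rk$.
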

\begin{claimproof}
Consider an arbitrary $j \in [m'/d]$ and let  $\varepsilon=1/M^{d-d'}$,
so that $B(F_{j})\ge\varepsilon n^{d-d'}.$ By the definition
of $F_{j}$, for every \emph{proper} subset $Y\subsetneq F_{j}$,
we have $B(Y)<(n/M)^{d-|Y|}\le(\varepsilon/M)n^{d-|Y|}$. The sum
of $B(T)$ over all size-$d'$ sets $T\supseteq Y$ satisfying $T\setminus Y\subseteq U(-1)$ is therefore at most 
\[
\binom{d-|Y|}{d'-|Y|}B(Y)\lesssim_{d,R}\frac{\varepsilon}{M} n^{d-d'}\binom{k-|Y|}{d'-|Y|}.
\]
That is to say, if $T$ is \emph{uniformly random} subject to the constraints $T\supseteq Y$ and $T\setminus Y\subseteq U(-1)$, then $\mb E[B(T)]\lesssim_{d,R} (\varepsilon/M) n^{d-d'}$, so by Markov's inequality, provided $M$ is sufficiently large in terms of $d,R$, we have
\begin{equation}
    \Pr\big[B(T)>(\varepsilon/M^{1/3}) n^{d-d'}\big]\le M^{-1/3}.\label{eq:markov}
\end{equation}
Now, say that a set $X\subseteq U(-1)$ is \emph{good} if $B(T)\le (\varepsilon/M^{1/3}) n^{d-d'}$
for every size-$d'$ subset $T\subseteq F_{j}\cup X$ containing
at least one vertex of $X$. 
If $X_j$ is good, then
\begin{align*}
    A_{\vec{v}}(I_j) &\geq 
    \sum_{W\supseteq F_j}\wh{\lambda}(W)2^{-|W|} - 
    \sum_{W\not \supseteq F_j}\wh{\lambda}(W)2^{-|W|}\\
    &\ge \left(B(F_j)-d'\binom{2k-1}{d-d'-1}\right)\cdot 2^{-d} - q\cdot \max_T B(T)\\
    &\gtrsim_d \eps n^{d-d'}-(\eps/M^{1/3}) n^{d-d'}\\
    &\gtrsim_{d,M} n^{d-d'}
\end{align*}
where the sums on the first line are over all $W\subseteq V_{\vec v}$ which
satisfy $|W\cap\{v_{i}(-1),v_{i}(1)\}|=1$ for every $i\in I_j$, and the ``max'' in the second line is over all size-$d'$ subsets $T\subseteq F_{j}\cup X_j$ containing
at least one vertex of $X_j$.

So, it suffices to show that $X_j$ is good with probability $\Omega(1)$ (recall that the randomness comes from a uniformly random permutation of the elements of $U(-1)$). To this end, for each proper subset $J\subsetneq I_j$, let $Y_J=\{v_i(1):i\in J\}\subseteq F_j$ and let $T_J$ be the random size-$d'$ set containing $v_i(1)$ for $i\in J$ and $v_i(-1)$ for $i\notin J$. If $B(T_J)\le (\varepsilon/M^{1/3}) n^{d-d'}$ for all $J\subsetneq I_j$, then $X_j$ is good. But note that each $T_J$ is a uniformly random set satisfying $T_J\supseteq Y$ and $T\setminus Y\subseteq U(-1)$, so by \cref{eq:markov} and a union bound over all $J\subsetneq I_j$, we see that $\Pr[X_j\text{ is not good}]\le 2^d\cdot M^{-1/3}\le 1/2$.
\end{claimproof}
Now, recall from \cref{rmk:coeff-bound} that we always have $|A_{\vec v}(I)|\le q 2^f (2k)^{d-f}=:b_f$ for all $I\in\binom{[k]}{f}$. As in \cref{cor:poly-LO}, let $a(d',t)=t b_{d'+1}+\dots+t^{d-d'}b_d\lesssim_{q,d}mk^{d-d'-1}$.

In the notation of \cref{lem:concentration-sequences}, note that $\vec X_1,\dots,\vec X_{m'/d}$ are uniform random ordered $d'$-subsets of $U(-1)$. 
So, by \cref{lem:concentration-sequences} (with $f=d'$ and $\gamma=\Omega(1)$) and the above claim, except with probability $O(1/m)\lesssim \on{LO}_d(m)$ we have $|A_{\vec v}(I_j)|\gtrsim_{q,M,d,R} k^{d-d'}$, and therefore $|A_{\vec v}(I_j)|>a(d',t)$, for $t=\Omega(m'/d)$ different $j$ (here we are using that $k/m\gg q,\delta,d,R$). Condition on an outcome of $\vec v$ such that this is the case. 

We may then apply \cref{cor:poly-LO}, using the randomness of $\vec \xi$, to obtain the desired conclusion.
\end{proof}

\section{A Poisson-type anticoncentration inequality}\label{sec:poisson-LO}
Another key ingredient for the proof of \cref{conj:1/e} is a ``Poisson-type'' anticoncentration inequality for polynomials of independent random variables, strengthening a result of Fox, Kwan and Sauermann~\cite[Theorem~1.8]{FKS21}. 

\newcommand{\yvar}{s}

\begin{theorem}\label{thm:Poisson-anticoncentration}
Fix $\gamma>0$, and suppose $p$ is sufficiently small in terms of $\gamma$. Let $F\in \mb R[x_1,\dots,x_s]$ be a multilinear polynomial which has nonnegative coefficients, and zero constant coefficient. Let $\beta_1,\dots,\beta_s$ be i.i.d.\ $\on{Bernoulli}(p)$ random variables. Then for any $t\in [0,\infty)$ and $\ell > 3^\yvar t$,
we have
\[\Pr\big[|F(\beta_1,\dots,\beta_\yvar)-\ell|\le t\big]\le \frac1e+\gamma.\]
\end{theorem}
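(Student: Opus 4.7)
The plan is to proceed by induction on $\yvar$. The base case $\yvar = 0$ is immediate: since $F$ has zero constant coefficient, $F(\beta_1, \ldots, \beta_0) = 0$, and the hypothesis $\ell > 3^0 t = t \ge 0$ forces $\Pr[|0 - \ell| \le t] = 0$.

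For the inductive step, I would peel off the last variable by writing $F(x_1, \ldots, x_\yvar) = x_\yvar G(x_1, \ldots, x_{\yvar-1}) + H(x_1, \ldots, x_{\yvar-1})$, where $G$ and $H$ are multilinear polynomials in $\yvar - 1$ variables with nonnegative coefficients; note that $H$ inherits a zero constant term from $F$. Let $c = G(\vec 0) \ge 0$ and write $G = c + G'$ so that $G'$ also has zero constant term. Conditioning on $\beta_\yvar$ gives
\[
\Pr[|F - \ell| \le t] = (1-p)\,\Pr[|H - \ell| \le t] + p\,\Pr[|G' + H - (\ell - c)| \le t].
\]
The first summand is bounded by $1/e + \gamma$ by applying the inductive hypothesis to $H$, since $H$ has zero constant coefficient and $\ell > 3^\yvar t > 3^{\yvar-1} t$.

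For the second summand I would distinguish two subcases according to the size of $c$. If $c \le 2 \cdot 3^{\yvar-1} t$, then $\ell - c > 3^{\yvar-1} t$, so the inductive hypothesis applies to $G' + H$ with target $\ell - c$, giving a bound of $1/e + \gamma$; combining yields $\Pr[|F - \ell| \le t] \le 1/e + \gamma$. The delicate case is $c > 2 \cdot 3^{\yvar-1} t$, in which the inductive hypothesis does not directly control the second summand. The crucial observation here is a disjointness structure: writing $A_1 = \{|H - \ell| \le t\}$ and $A_2 = \{|G' + H - (\ell - c)| \le t\}$, the event $A_1$ forces $H \ge \ell - t$, whereas $A_2$ forces $G' + H \le \ell - c + t \le 3^{\yvar-1} t + t$, and since $0 \le H \le G' + H$ this also forces $H \le 3^{\yvar-1} t + t < \ell - t$; hence $A_1 \cap A_2 = \emptyset$, giving $\Pr[A_1] + \Pr[A_2] \le 1$.

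The main obstacle I anticipate is turning this disjointness into a bound of exactly $1/e + \gamma$ uniformly in $\yvar$. A direct calculation combining $\Pr[A_1] + \Pr[A_2] \le 1$ with the inductive bound $\Pr[A_1] \le 1/e + \gamma$ yields $(1-p)\Pr[A_1] + p\Pr[A_2] \le (1/e + \gamma)(1 - 2p) + p = 1/e + \gamma + O(p)$, and naively accumulating this $O(p)$ loss at every hard-case step would force $p$ to depend on $\yvar$, which is forbidden by the statement. To close the induction cleanly I expect one must either carry a strengthened inductive hypothesis with explicit additional slack (e.g.\ bounding a well-chosen joint functional of the two disjoint events rather than each individually) or leverage the structural information encoded by a large constant term $c$, for instance by using smallness of $p$ to show that the hard case only depletes the slack once along any recursion branch. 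Managing this slack carefully is what I anticipate to be the technical heart of the proof.
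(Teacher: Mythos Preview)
Your diagnosis of the obstacle is accurate, but your anticipated fixes (carrying extra slack, or arguing the hard case occurs at most once per branch) are not how the paper closes the gap. The paper's resolution rests on two observations you are missing.

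First, you always peel off the \emph{last} variable $\beta_\yvar$; the paper instead chooses \emph{which} variable to peel off. Writing $a_i$ for the coefficient of $x_i$ in $F$ (so $a_\yvar$ is your $c$), the paper picks, after relabelling, an index $\yvar$ with $a_\yvar \le (\ell+t)/2$, whenever such an index exists. With this choice one has $\ell - a_\yvar \ge (\ell-t)/2 \ge \ell/3 > 3^{\yvar-1}t$, so the inductive hypothesis applies cleanly to \emph{both} conditional branches ($\beta_\yvar=0$ and $\beta_\yvar=1$) and no $O(p)$ loss is ever incurred in this case.

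Second, when no such index exists, i.e.\ when $a_i > (\ell+t)/2$ for \emph{every} $i\in[\yvar]$, the paper does not recurse at all. Instead it observes that the event $|F(\vec\beta)-\ell|\le t$ then forces exactly one $\beta_i$ to equal $1$: if none are $1$ then $F=0<\ell-t$, and if at least two are $1$ then by nonnegativity of coefficients $F \ge a_i + a_j > \ell+t$. Hence in this case $\Pr[|F-\ell|\le t]\le \Pr[\operatorname{Bin}(\yvar,p)=1]$. To make this compatible with the induction, the paper actually proves the strengthened inductive statement with bound $\max_{n\in\mb N}\Pr[\operatorname{Bin}(n,p)=1]$ in place of $1/e+\gamma$; this quantity is independent of $\yvar$ and tends to $1/e$ as $p\to 0$, so the all-large-coefficients case closes directly with no leakage.
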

\begin{proof}
 We prove by induction on $\yvar$ that whenever $\ell>3^\yvar t$ and $F\in \mb R[x_1,\dots,x_s]$ is a multilinear polynomial which has nonnegative coefficients and zero constant coefficient, we have
    \[\Pr\big[|F(\vec \beta)-\ell|\le t\big]\le \max_{n\in\mb N} \Pr[X_{n,p}=1],\]
    where $X_{n,p}\sim \on{Bin}(n,p)$. This suffices, because elementary estimates show that the right-hand side of the above expression converges to $1/e$ as $p\to 0$ (see \cite[Lemma~3.3]{FKS21}).
    
    The desired statement is vacuously true for $\yvar=0$, so fix some $\yvar\ge 1$ and assume the desired statement is true for smaller $\yvar$. Write $\|\vec \beta\|_0$ for the number of nonzero entries of $\vec \beta$.

    Let $a_i$ be the coefficient of $x_i$ in $F(x_1,\dots,x_\yvar)$. First, if we have $a_i> (\ell+t)/2$ for all $i$, then we can only have $|F(\vec \beta)-\ell|\le t$ if $\|\vec \beta\|_0=1$.
    Indeed, if we had $\|\vec\beta\|_0=0$ we would have $F(\vec \beta)=0$, and if we had $\|\vec\beta\|_0\ge 2$ we would have $F(\vec \beta)-\ell> 2(\ell+t)/2-\ell=t$. The desired bound follows.

    Otherwise, suppose without loss of generality that $a_\yvar\le (\ell+t)/2$. We can write $F(x_1,\dots,x_\yvar)=G(x_1,\dots,x_{\yvar-1})+a_\yvar x_\yvar+x_\yvar H(x_1,\dots,x_{\yvar-1})$, where $G,H\in \mb R[x_1,\dots,x_{s-1}]$ are multilinear polynomials which have nonnegative coefficients, and zero constant coefficient. We will show the desired bound conditional on both of the two possible outcomes of $\beta_\yvar$.

    Let $\pvec\beta=(\beta_1,\dots,\beta_{\yvar-1})$. If $\beta_\yvar=0$, then $|F(\vec \beta)-\ell|\le t$ if and only if $|G(\pvec \beta)-\ell|\le t$, and the desired result follows by induction. Otherwise, if $\beta_\yvar=1$, then 
    $|F(\beta)-\ell|\le t$ if and only if $|(G+H)(\pvec \beta)-(\ell-a_\yvar)|\le t$. Note that $\ell-a_\yvar\ge (\ell-t)/2\ge \ell/3 > 3^{\yvar-1}t$, so the desired result again follows by induction.
\end{proof}

\section{Comparison between the slice and a product measure}\label{sec:TV-comparison}
Recall that in order to apply bounds on the polynomial Littlewood--Offord problem, we needed a lemma describing a slice measure in terms of a product measure (\cref{lem:poly-coupling}). In order to apply \cref{thm:Poisson-anticoncentration}, we also need a comparison between a slice measure and a product measure, but since \cref{thm:Poisson-anticoncentration} only applies to polynomials with nonnegative coefficients, we will need a lemma of a rather different flavour, for functions on the slice that only depend on a few coordinates.

For probability distributions $\mu,\nu$ with the same sigma-algebra of events, recall that the \emph{total variation distance} $\on{d}_{\mr{TV}}(\mu,\nu)$ is the supremum of $|\mu(A)-\nu(A)|$ over all events $A$.
\begin{theorem}\label{thm:slice-vs-product}
Let $k\le n/2$.
Consider any set $S$ and function $F:\{0,1\}^n\to S$, such that $F(x_1,\dots,x_n)$ only depends on $x_1,\dots,x_{\yvar}$.     Let $\vec \sigma\sim \on{Slice}(n,k)$ and let $\vec \beta$ be a vector of $n$ independent $\on{Bernoulli}(k/n)$ random variables. Then 
    \[\on d_{\mr{TV}}\!\big(F(\vec \sigma),F(\vec \beta)\big)\leq \frac{\max(\yvar,2n/k)-1}{n-1}.\]
\end{theorem}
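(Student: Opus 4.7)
The plan is to reduce the total variation distance between $F(\vec\sigma)$ and $F(\vec\beta)$ to a comparison between a hypergeometric and a binomial distribution, and then invoke Ehm's classical estimate on that comparison.

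First, since $F(x_1,\dots,x_n)$ depends only on $x_1,\dots,x_\yvar$, the data processing inequality for total variation gives
\[
\on d_{\mr{TV}}\!\big(F(\vec\sigma), F(\vec\beta)\big) \;\le\; \on d_{\mr{TV}}\!\big((\sigma_1,\dots,\sigma_\yvar), (\beta_1,\dots,\beta_\yvar)\big).
\]
So it suffices to bound the right-hand side.

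Next, I would observe that both $(\sigma_1,\dots,\sigma_\yvar)$ and $(\beta_1,\dots,\beta_\yvar)$ are mixtures with the \emph{same} conditional distributions given the number of ones. Indeed, by symmetry of the slice measure (it is exchangeable), conditional on $M_\sigma := \sigma_1+\dots+\sigma_\yvar = j$, the vector $(\sigma_1,\dots,\sigma_\yvar)$ is uniform on $\{0,1\}^\yvar$ with $j$ ones; and conditional on $M_\beta := \beta_1+\dots+\beta_\yvar = j$, the vector $(\beta_1,\dots,\beta_\yvar)$ is uniform on the same set. Meanwhile, the marginal distributions are $M_\sigma \sim \on{Hyp}(n,k,\yvar)$ and $M_\beta \sim \on{Bin}(\yvar, k/n)$. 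Since the two distributions of $(\cdot_1,\dots,\cdot_\yvar)$ are obtained from $M_\sigma$ and $M_\beta$ respectively by applying the \emph{same} Markov kernel (uniform placement), another application of data processing yields
\[
\on d_{\mr{TV}}\!\big((\sigma_1,\dots,\sigma_\yvar),(\beta_1,\dots,\beta_\yvar)\big)
\;\le\; \on d_{\mr{TV}}\!\big(\on{Hyp}(n,k,\yvar),\on{Bin}(\yvar,k/n)\big).
\]

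Finally, I would invoke the result of Ehm~\cite{Ehm91}, which (under the variance condition $(k/n)(1-k/n)\yvar \ge 1$ that is hypothesised in the statement) bounds the total variation distance between the hypergeometric and binomial distributions with matching means by $(\yvar-1)/(n-1)$. Chaining the inequalities gives the theorem. The main ``step'' is really just recognising the reduction; the quantitative input is the Ehm estimate, which we cite as a black box, so there is no substantial obstacle once the symmetry-plus-data-processing reduction is in place.
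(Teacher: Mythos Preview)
Your proof is correct and is essentially the same as the paper's: both reduce to Ehm's bound by observing that, conditional on the number of ones among the first $\yvar$ coordinates, the two distributions agree (by exchangeability), so the total variation collapses to that between $\on{Hyp}(n,k,\yvar)$ and $\on{Bin}(\yvar,k/n)$. The only cosmetic difference is that you insert the intermediate projection to $(\sigma_1,\dots,\sigma_\yvar)$ before passing to the count, whereas the paper conditions $F$ on the count directly.
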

\newcommand{\newvar}{t}
We will prove \cref{thm:slice-vs-product} using the following theorem of Ehm~\cite{Ehm91}. Write $\on{Hyp}(n,k,\newvar)$ for the hypergeometric distribution with $k$ draws from a population of size $n$ with $\newvar$ ``featured'' elements, and let $\on{Bin}(n,p)$ be the binomial distribution with $n$ trials and success probability $p$.
\begin{theorem}[{\cite[Theorem~2]{Ehm91}}]
\label{thm:Ehm}
If $(k/n)(1-k/n)\newvar\ge 1$ then
    \[\on d_{\mr{TV}}\!\big(\on{Hyp}(n,k,\newvar),\on{Bin}(\newvar,k/n)\big)\leq \frac{\newvar-1}{n-1}.\]
\end{theorem}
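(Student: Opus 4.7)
The plan is to reduce the theorem to Ehm's theorem (\cref{thm:Ehm}) via two standard reductions. First, since $F(\vec x)$ depends only on $x_1,\dots,x_\yvar$, there is some $F':\{0,1\}^\yvar\to S$ with $F(\vec x)=F'(x_1,\dots,x_\yvar)$. The data processing inequality for total variation distance (nonincreasingness under pushforward by any function) then gives
\[\on d_{\mr{TV}}\!\big(F(\vec\sigma),F(\vec\beta)\big)\le \on d_{\mr{TV}}\!\big((\sigma_1,\dots,\sigma_\yvar),(\beta_1,\dots,\beta_\yvar)\big),\]
so it suffices to control the total variation distance between the marginal distributions on $\{0,1\}^\yvar$.

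Next, I would exploit the \emph{exchangeability} of both marginals: both $(\sigma_1,\dots,\sigma_\yvar)$ and $(\beta_1,\dots,\beta_\yvar)$ have the property that, conditional on the number of ones, the vector is uniformly distributed among all elements of $\{0,1\}^\yvar$ with that many ones. Explicitly, using that $\sigma_1+\dots+\sigma_\yvar\sim\on{Hyp}(n,k,\yvar)$ and $\beta_1+\dots+\beta_\yvar\sim\on{Bin}(\yvar,k/n)$, for any $v\in\{0,1\}^\yvar$ with exactly $j$ ones we have
\[\Pr[(\sigma_1,\dots,\sigma_\yvar)=v]=\frac{\Pr[\on{Hyp}(n,k,\yvar)=j]}{\binom{\yvar}{j}},\qquad \Pr[(\beta_1,\dots,\beta_\yvar)=v]=\frac{\Pr[\on{Bin}(\yvar,k/n)=j]}{\binom{\yvar}{j}}.\]
Summing $|\Pr_\sigma[v]-\Pr_\beta[v]|$ over all $v\in\{0,1\}^\yvar$ and grouping by $j$, the $\binom{\yvar}{j}$ factors cancel exactly with the number of $v$ with $j$ ones, yielding the identity
\[\on d_{\mr{TV}}\!\big((\sigma_1,\dots,\sigma_\yvar),(\beta_1,\dots,\beta_\yvar)\big)=\on d_{\mr{TV}}\!\big(\on{Hyp}(n,k,\yvar),\on{Bin}(\yvar,k/n)\big).\]

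Finally, I would invoke Ehm's theorem (\cref{thm:Ehm}) as a black box: under the hypothesis $(k/n)(1-k/n)\yvar\ge 1$, it bounds the right-hand side above by $(\yvar-1)/(n-1)$, which completes the proof. There is no real obstacle here; once one observes that exchangeability collapses the multivariate comparison on $\{0,1\}^\yvar$ to the one-dimensional comparison between $\on{Hyp}(n,k,\yvar)$ and $\on{Bin}(\yvar,k/n)$, all the work is already done by Ehm's estimate.
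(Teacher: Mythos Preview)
Your proposal is not a proof of the stated theorem. The statement labelled \cref{thm:Ehm} is Ehm's theorem itself, quoted verbatim from \cite[Theorem~2]{Ehm91}; the paper does not prove it and treats it as a black box. What you have written is a proof of \cref{thm:slice-vs-product} (the comparison between $F(\vec\sigma)$ and $F(\vec\beta)$), and your argument ends by \emph{invoking} \cref{thm:Ehm}. As a proof of \cref{thm:Ehm} this is circular: the objects $F,\vec\sigma,\vec\beta$ do not even appear in its statement, and the final step assumes exactly the inequality you are supposed to establish.

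If your intent was to prove \cref{thm:slice-vs-product}, then your argument is correct and is essentially the same as the paper's: reduce to the marginal on the first $\yvar$ coordinates by the data processing inequality, use exchangeability to identify the total variation between the marginals with $\on d_{\mr{TV}}(\on{Hyp}(n,k,\yvar),\on{Bin}(\yvar,k/n))$, and then apply Ehm's bound. The paper phrases the middle step slightly differently (conditioning on the count $\|\vec x\|_{0;\yvar}$ and noting that the conditional laws agree), but the content is identical.
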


\begin{proof}[Proof of \cref{thm:slice-vs-product}]
Let $\newvar=\max(s,2n/k)$, so $(k/n)(1-k/n)\newvar \ge 1$ and $F(x_1,\dots,x_n)$ only depends on $x_1,\dots,x_t$ (since $\newvar\ge \yvar$). For $\vec x\in \{0,1\}^n$, let $\|\vec x\|_{0;\newvar}$ be the number of $i\in [\newvar]$ for which $x_i=1$.  Let $\vec\sigma \sim \on{Slice}(n,k)$ and let $\vec \beta$  be a vector of $n$ independent $\on{Bernoulli}(k/n)$ random variables. Then clearly $\|\vec \sigma\|_{0;\newvar}\sim \on{Hyp}(n,k,\newvar)$ and $\|\vec \beta\|_{0;\newvar}\sim \on{Bin}(\newvar,k/n)$. For all $q$ in the support of both $\on{Hyp}(n,k,\newvar)$ and $\on{Bin}(\newvar,k/n)$, the conditional distribution of $F(\vec \sigma)$ given $\|\vec \sigma\|_{0;\newvar}=q$ is the same as the conditional distribution of $F(\vec \beta)$ given $\|\vec \beta\|_{0;\newvar}=q$. So,
    \[\on d_{\mr{TV}}\!\big(F(\vec{\sigma}), F(\vec{\beta})\big) \leq \on d_{\mr{TV}}\!\big(\|\vec \sigma\|_{0;\newvar}, \|\vec \beta\|_{0;\newvar}\big) \leq \frac{\newvar-1}{n-1},\]
    where the last inequality follows from \cref{thm:Ehm}.
\end{proof}

\section{The vertex cover lemma}\label{sec:vertex-cover}
In this section we prove a lemma showing that every hypergraph $G$ has a small vertex set $Y$ that ``only sees large matchings'', in the sense that for any subset $S\subseteq Y$, if we remove all the edges intersecting $S$, and we remove all the vertices of $Y\setminus S$ from all the remaining edges, the resulting hypergraph either has a large matching or no edges at all.
\begin{definition}
\label{def:G-YX}
Let $G$ be an $r$-uniform hypergraph. For vertex subsets $X\subseteq Y$,
write $G_{Y}(X)=\{e\backslash X:e\in G-(Y\backslash X),\;e\backslash X\ne\emptyset\}$.
In other words, delete all edges which intersect $Y\backslash X$,
and look at the portion of each edge outside $X$. This is a mixed-uniformity
hypergraph, whose edges have sizes between $1$ and $r$. For $d\in[r]$,
let $G_{Y}^{(d)}(X)$ be the subhypergraph of edges which have size
exactly $d$.
\end{definition}

\begin{lemma}\label{lem:vertex-cover}
Let $r, m \in \mb{N}$ and let $G$ be an $r$-uniform hypergraph. We can find a vertex
set $Y$ of size $O_{r,m}(1)$ such that the following holds. Consider
any $X\subseteq Y$ such that $G_{Y}(X)$ is nonempty, and let $d$
be the maximum integer such that $G_{Y}^{(d)}(X)$ is nonempty. Then
$G_{Y}^{(d)}(X)$ has a matching of size at least $m$. Moreover, if $Y\neq \emptyset$, then every edge of $G$ has non-empty intersection with $Y$.
\end{lemma}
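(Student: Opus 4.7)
\emph{Plan.} I will prove the lemma by induction on $r$. The base case $r=1$ is easy: if $G$ has at least $m$ singleton edges take $Y = \emptyset$, otherwise take $Y$ to be the (fewer than $m$) singletons, and check the conditions directly. For the inductive step with $r \ge 2$, I split into two cases. If $G$ itself has a matching of size at least $m$, take $Y = \emptyset$: then $G_Y(\emptyset) = G$ has top $r$-uniform layer equal to $G$, which supplies the required matching, and the ``moreover'' clause is vacuous. Otherwise, the vertex set $Y_0$ of a maximum matching is a vertex cover of $G$ of size at most $(m-1)r$, and in particular every edge of $G$ meets $Y_0$.

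In this remaining case I proceed by examining each ``link''. For each nonempty $S \subseteq Y_0$, define the $(r-|S|)$-uniform hypergraph $H_S = \{e \setminus S : e \in G,\ e \cap Y_0 = S\}$ on vertex set $V(G) \setminus Y_0$. Applying the inductive hypothesis to each $H_S$ furnishes a set $Z_S$ of size $O_{r,m}(1)$ satisfying the lemma for $H_S$, and I set $Y := Y_0 \cup \bigcup_S Z_S$, which has size $O_{r,m}(1)$. To verify the conclusion, the ``moreover'' clause is immediate since $Y_0 \subseteq Y$ is a vertex cover. For the main condition, take $X \subseteq Y$ with $G_Y(X) \ne \emptyset$ and decompose $X = X_0 \sqcup X_1$ where $X_0 = X \cap Y_0$. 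Since every edge of $G$ meets $Y_0$ whereas any edge $e$ contributing to $G_Y(X)$ has $e \cap Y_0 \subseteq X_0$, we must have $X_0 \ne \emptyset$. Partitioning these edges by $S := e \cap Y_0 \subseteq X_0$ and selecting $S^*$ witnessing the top layer $d$ of $G_Y(X)$, I plan to transfer the matching guaranteed by induction in $(H_{S^*})_{Z_{S^*}}(X_1 \cap Z_{S^*})$ to a matching of size $\ge m$ in $G_Y^{(d)}(X)$, using the matching between the stratifications of the two hypergraphs.

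The main obstacle is this last transfer step: an edge $e \in G$ with $e \cap Y_0 = S^*$ that contributes to $G_Y^{(d)}(X)$ must satisfy $e \cap Z_{S'} \subseteq X_1$ for \emph{every} $S'$, not only $S' = S^*$, whereas the inductive hypothesis applied to $H_{S^*}$ only controls the $Z_{S^*}$-interactions. Cross-interference from the other $Z_{S'}$'s could in principle shrink the lifted matching. I expect to address this by either (i) modifying the construction to enforce that the $Z_S$'s are pairwise disjoint from each other and from $Y_0$ (paying at most an $O_{r,m}(1)$ blowup in $|Y|$), which kills the cross-interference, or (ii) iteratively augmenting $Y$ by vertex covers of any remaining ``bad'' top layers $G_Y^{(d)}(X)$ and bounding the number of rounds by $O_{r,m}(1)$ via a potential function argument on the nested structure of subsets that become bad.
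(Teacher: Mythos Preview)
Your inductive approach is natural, but the cross-interference obstacle you identify is genuine, and neither of your proposed fixes resolves it.

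Fix (i) does not work: the interference is not caused by the sets $Z_S$ overlapping one another, but by \emph{edges of $H_{S^*}$} meeting $Z_{S'}$ for $S'\ne S^*$. An edge $f=e\setminus S^*\in H_{S^*}$ is an arbitrary subset of $V(G)\setminus Y_0$, and enforcing $Z_{S^*}\cap Z_{S'}=\emptyset$ does nothing to prevent $f\cap Z_{S'}\ne\emptyset$. Concretely, the inductive hypothesis hands you a matching of size $m$ in the top layer of $(H_{S^*})_{Z_{S^*}}(X_1\cap Z_{S^*})$, but that top layer may have size $d'>d$ (since $|f\setminus Z_{S^*}|\ge |e\setminus Y|$, with strict inequality whenever $f$ meets some $Z_{S'}\setminus X_1$), and even when $d'=d$ a matching edge $f\setminus Z_{S^*}$ need not equal $e\setminus Y$ for any $e$ with $e\cap Y\subseteq X$. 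So nothing transfers to $G_Y^{(d)}(X)$.

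Fix (ii) is the right direction, and is essentially what the paper does --- but the paper abandons the induction on $r$ entirely. Starting from $Z=\emptyset$, it repeatedly augments $Z$ by the vertex set of a maximum matching in a smallest ``bad'' link $\Gamma_Z(S)=\{e\setminus S: e\in G,\ e\cap Z=S\}$. The real content of the proof is the termination bound, which your sketch does not supply and which is not routine. The paper tracks the vector $(N_0(Z),\dots,N_r(Z))$, where $N_\ell(Z)$ counts the ``$Z$-relevant'' sets of size $\ell$ (those $S$ with $\Gamma_Z(S)\ne\emptyset$ but $\Gamma_Z(S')=\emptyset$ for all $S'\subsetneq S$), and shows that each augmentation strictly decreases some $N_d$ while not increasing any $N_f$ with $f<d$; the coordinates $N_f$ with $f>d$ may jump, but are crudely bounded by $|Z|^f$. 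A well-founded (lexicographic) induction on $\mb N_{\ge 0}^{r+2}$ then yields $|Y|=O_{r,m}(1)$. Without this or an equivalent potential argument, your proposal has no bound on $|Y|$.
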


\begin{proof}
It will be more convenient to prove the following statement. Let $V$ be the vertex set of $G$. For $S\subseteq Y\subseteq V$, let $\Gamma_{Y}(S):=\{e\backslash S:\,e\in G,\, e\cap Y=S\}$ (this is a hypergraph whose edges have size exactly $r-|S|$). Say
that $S$ is \emph{$Y$-relevant} if $\Gamma_{Y}(S)$ is nonempty and $\Gamma_{Y}(S')$ is empty for all $S'\subsetneq S$. 
We say that a relevant set $S$ is $m$-\emph{bad} (or simply, bad), if the maximum matching in $\Gamma_Y(S)$ has size at most $m -1$. We will show that we can
choose a vertex set $Y$ of size $O_{r,m}(1)$ such that there are no $Y$-relevant sets $S\subseteq Y$ which are bad. To see that this implies the conclusion of the lemma, for $X\subseteq Y$, let $d$ be the maximum integer such that $G_Y^{(d)}(X)$ is nonempty. Since $G_Y^{(d)}(X)$ is nonempty, there exists $S\subseteq X$ of size $r-d$ such that $\Gamma_Y(S)$ is non-empty, and since $d$ is the largest integer satisfying this property, it must be the case that $\Gamma_Y(S') = \emptyset$ for all $S'\subsetneq S$. In other words, $S$ is $Y$-relevant. 
Therefore, $\Gamma_Y(S)$ has a matching of size at least $m$, and since $\Gamma_Y(S)\subseteq G_Y^{(d)}(X)$, our conclusion follows.   

We will construct our desired set $Y$ iteratively, by iterating a ``greedy cover'' map $\phi$ starting from the empty set. Specifically, fix an ordering of the vertex set, and for $Z\subseteq V$, we define $\phi(Z)\supseteq Z$ as follows.

\begin{itemize}
\item If there is some $Z$-relevant set
$S\subseteq Z$ such that $\Gamma_{Z}(S)$ has no matching of size
at least $m$, then consider such a set $S$ with the smallest size (breaking ties lexicographically, according to our ordering of the vertex set) and a maximum matching $M$ in $\Gamma_{Z}(S)$ (again, breaking ties lexicographically), let $W$ be the vertex set of $M$, and set $\phi(Z)=Z\cup W$.
\item Otherwise, set $\phi(Z)=Z$.
\end{itemize}

For $Z\subseteq V$, let $\phi^*(Z)$ be the result of repeatedly applying the map $\phi$, starting with $Z$, until it stabilises, and let $Y=\phi^*(\emptyset)$. We just need to show that $|Y|=O_{r,m}(1)$; this will be a simple inductive consequence
of the following claim.

\begin{claim*}
For $Z\subseteq V$ and $\ell\in\{0,\dots,r\}$, let $N_{\ell}(Z)$ be the
number of $Z$-relevant sets $T\subseteq Z$ with $|T|=\ell$. If $\phi(Z)\ne Z$, then there exists $d \in \{0,\dots, r\}$ for which the following hold: \begin{enumerate}
\item $N_{d}(\phi(Z))<N_{d}(Z)$,
\item $N_{f}(\phi(Z)) \leq N_{f}(Z)$ for all $f<d$,
\item $|\phi(Z)|<|Z|+rm$.
\item $N_{f}(\phi(Z))\le |\phi(Z)|^f< (|Z|+rm)^f$ for all $f>d$,
\end{enumerate}
\end{claim*}
\begin{claimproof}
First, parts (3) and (4) are immediate, because $\phi(Z)$ is obtained by adding fewer than $rm$ vertices to $Z$.

To prove parts (1) and (2), suppose $\phi(Z)\neq Z$ and let $S \subseteq Z$ be the set appearing in the definition of $\phi(Z)$. Note that, by construction, $\Gamma_{\phi(Z)}(S) = \emptyset$. In particular, $S$ is a $Z$-relevant set which is not $\phi(Z)$-relevant. Let $S^* \subseteq Z$ be a smallest set which is $Z$-relevant but not $\phi(Z)$-relevant and let $d = |S^*|$. This is the value of $d$ for which we will prove parts (1) and (2). In order to do this, it suffices to show the following: for any $\phi(Z)$-relevant set $T\subseteq \phi(Z)$ with $|T|\leq d$, it must be the case that $T\subseteq Z$ and in fact, that $T$ is $Z$-relevant. This immediately implies (2), and since $S^*$ is a $Z$-relevant set which is not $\phi(Z)$-relevant, we also get (1). 

Suppose $T\subseteq \phi(Z)$ is a $\phi(Z)$-relevant set of size $|T|\leq d$. Suppose for contradiction that either (a) $T\not \subseteq Z$, or (b) $T\subseteq Z$ but $T$ is not $Z$-relevant. In either case, since $\Gamma_{\phi(Z)}(T)$ is non-empty, it follows that $\Gamma_{Z}(T\cap Z)$ is non-empty as well, so that there exists $T'\subseteq T\cap Z$ such that $T'$ is $Z$-relevant. In case (a), $|T\cap Z| < |T| \leq d$ and in case (b) $T'\subsetneq T$, so that in either case, $|T'| \leq d-1$. Since $S^*$ is a smallest $Z$-relevant set which is not $\phi(Z)$-relevant, it must be the case that $T'$ is $\phi(Z)$-relevant. However, since $T'\subseteq T$, this contradicts that $T$ is $\phi(Z)$-relevant.
\end{claimproof}

Recall that we wish to show that $|Y| = |\phi^*(\emptyset)| = O_{r,m}(1)$. For nonnegative integers $N_{0},\dots,N_{r},z$, define $F(N_{0},\dots,N_{r},z)$ to be the maximum of $|\phi^{*}(Z)|$, over all $Z\subseteq V$ with
\[|Z|=z,\quad N_{0}(Z)\le N_{0},\quad N_{1}(Z)\le N_{1},\quad\dots,\quad N_{r}(Z)\le N_{r},\]
and over all $r$-uniform hypergraphs $G$ on \emph{any} vertex set $V$. (\emph{A priori}, it is possible that no such maximum exists, in which case we set $F(N_0,\dots,N_r,z)=\infty$.) Since $|Y| = |\phi^*(\emptyset)| \leq F(1,0,\dots,0)$, our goal is to show that $F(1,0,\dots,0) < \infty$. Since the input parameters to the function $F$ are only $r$ and $m$ (via the definition of $\phi^*(Z)$), in this case it is clear that $F(1,0,\dots,0)$ depends only on $r$ and $m$. In fact, writing $\mb N_{\ge 0}$ for the nonnegative integers, we will show that for any $(N_0,\dots, N_r, z) \in \mb{N}_{\ge 0}^{r+2}$, we have $F(N_0,\dots, N_r, z) < \infty$.

To see this, note that the above claim implies a recurrence for $F(N_0,\dots, N_r, z)$: for all $(N_0,\dots, N_r, z)$, either $F(N_0,\dots, N_r, z)=z$ or
\begin{equation}F(N_0,\dots, N_r, z) \le \max_{d\in \{0,\dots,r\}:N_d> 0} F(N_0,\dots, N_{d-1}, N_d-1, (z+rm)^{d+1},\dots, (z+rm)^r, z+rm)\label{eq:recurrence}\end{equation}
 (here we use the convention that the maximum of the empty set is $-\infty$; in other words, $F(0,\dots,0,z)=z$, which is also easy to see directly). Now, the desired result follows by induction (most easily described in a ``transfinite'' way): since the lexicographic order on $\mb{N}_{\ge 0}^{r+2}$ is a well-order, if $F(N_0,\dots, N_r, z) = \infty$ for some $(N_0,\dots, N_r, z)$ then there must exist a lexicographically \emph{minimal} $(N_0^*,\dots, N_r^*, z^*)$ for which $F(N_0^*,\dots, N_r^*, z^*) = \infty$. But this is impossible: we would have $F(N_0^*,\dots, N_r^*, z^*) \ne z^*$, so \cref{eq:recurrence} would contradict lexicographic minimality.

Finally, the ``moreover'' part is clear by construction.
\end{proof}

\section{A variance bound for polynomials on the slice}\label{sec:variance}
We need one more technical ingredient for the proof of \cref{conj:1/e}, namely a 
bound on the variance of a polynomial on the slice.
\begin{proposition}\label{lem:cheap-variance}
    For any $n\ge k$, let $\lambda\in\mb R[x_1,\dots,x_n]$ be an $n$-variable multilinear polynomial with degree at most $d$ whose coefficients all have absolute value at most $q$. Let $\vec \sigma\sim \on{Slice}(n,k)$. Then
    \[\on{Var}[\lambda(\vec \sigma)]\lesssim_{d,q} n^{2d-1}.\]
\end{proposition}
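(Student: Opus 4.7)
The plan is to expand the variance in terms of the coefficients of $\lambda$ as
\[
\on{Var}[\lambda(\vec\sigma)] = \sum_{W,W'} \wh{\lambda}(W)\wh{\lambda}(W')\on{Cov}[\vec\sigma^W,\vec\sigma^{W'}],
\]
where the sums range over subsets $W,W'\subseteq[n]$ of size at most $d$. Since $|\wh\lambda(W)|\le q$ for every $W$, I would split the double sum according to $u := |W\cap W'|$ and bound each range separately.

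The easier case is $u\ge 1$. Here the number of ordered pairs $(W,W')$ with $|W|,|W'|\le d$ and $|W\cap W'|=u$ is at most $\binom{n}{u}\cdot \binom{n-u}{|W|-u}\cdot \binom{n-u}{|W'|-u} \lesssim_d n^{2d-u}$, so summing over $u\ge 1$ there are only $O_d(n^{2d-1})$ such pairs. Bounding the covariance trivially by $|\on{Cov}[\vec\sigma^W,\vec\sigma^{W'}]|\le 2$ (since $\vec\sigma^W,\vec\sigma^{W'}\in\{0,1\}$), these pairs contribute $O_{d,q}(n^{2d-1})$ to the variance, which is within the desired bound.

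The main step is the disjoint case $u=0$: there are up to $n^{2d}$ such pairs, so a trivial covariance bound no longer suffices. Instead I would prove the ``near-independence'' estimate
\[
|\on{Cov}[\vec\sigma^W,\vec\sigma^{W'}]|\lesssim_d 1/n\qquad\text{whenever }W\cap W'=\emptyset.
\]
To prove it, set $p_j := (k-j)/(n-j)$, observe that for disjoint $W,W'$ one has
\[
\mb E[\vec\sigma^W] = \prod_{i=0}^{|W|-1} p_i,\qquad \mb E[\vec\sigma^W\vec\sigma^{W'}] = \prod_{i=0}^{|W|+|W'|-1} p_i,
\]
and factor $\on{Cov}[\vec\sigma^W,\vec\sigma^{W'}] = \prod_{i=0}^{|W|-1}p_i\cdot\bigl(\prod_{j=|W|}^{|W|+|W'|-1}p_j - \prod_{j=0}^{|W'|-1}p_j\bigr)$. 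A short telescoping argument, together with the identity $p_{j+|W|}-p_j = |W|(k-n)/((n-j-|W|)(n-j)) = O_d(1/n)$ (using $|k-n|\le n$) and the fact that each $p_j\in[0,1]$, gives the claimed bound. The disjoint case then contributes $n^{2d}\cdot q^2\cdot O_d(1/n) = O_{d,q}(n^{2d-1})$, as required.

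The only real obstacle is the near-independence estimate above; everything else is just counting. Morally this estimate is the statement that sampling without replacement differs from sampling with replacement by $O(1/n)$ on monomials of bounded degree, so its proof is genuinely short. The resulting bound $n^{2d-1}$ is essentially tight, as can already be seen from the $u=1$ contribution in generic examples, so no further refinement is needed for the application in \cref{sec:completing}.
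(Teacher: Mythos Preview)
Your proof is correct, but it proceeds differently from the paper's. You handle the disjoint pairs by proving the quantitative near-independence estimate $|\on{Cov}[\vec\sigma^W,\vec\sigma^{W'}]|\lesssim_d 1/n$ via an explicit telescoping computation with the factors $p_j=(k-j)/(n-j)$. The paper instead uses a sign trick: since $x_1+\dots+x_n$ is constant on the slice, one may replace $\lambda$ by $Q=\lambda+q(x_1+\dots+x_n)^d$ without changing the variance, and $Q$ has all coefficients nonnegative. Then the paper shows (via a short product manipulation) that $\on{Cov}[\vec\sigma^W,\vec\sigma^T]\le 0$ for disjoint $W,T$, so the disjoint terms can simply be dropped from the variance expansion; the intersecting terms are handled exactly as you do. Your route is arguably more direct and yields a two-sided $O_d(1/n)$ bound on the disjoint covariances, which is a bit more information; the paper's route avoids any asymptotic estimate on the covariance and instead exploits only its sign, at the cost of the preliminary ``make coefficients nonnegative'' step. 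Both arguments are short and give the same $n^{2d-1}$ bound.
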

\begin{proof}
Since $x_1 + \dots + x_n$ is constant on $\vec{x} \in \on{Slice}(n,k)$, it follows that
$\on{Var}[\lambda(\vec{\sigma})] = \on{Var}[Q(\vec{\sigma})]$, where $Q(\vec{x}) = \lambda(\vec{x}) + q(x_1 + \dots + x_n)^d$.
Since the coefficients of $\lambda$ have absolute value at most $q$, it follows that the coefficients of $Q$ are non-negative and have absolute value at most $O_d(q)$. The key point is the following: let $W, T \subseteq [n]$  be disjoint sets with $|W| = i$ and $|T| = j$. Then, writing $a\equiv b$ if $a$ and $b$ have the same sign, we have
\begin{align*}
\on{Cov}[\vec \sigma^W, \vec \sigma^T] &=
    \frac{\binom{n-i-j}{k-i-j}}{\binom{n}{k}} - \frac{\binom{n-i}{k-i}}{\binom{n}{k}}\cdot \frac{\binom{n-j}{k-j}}{\binom{n}{k}}\\
    &\equiv \binom{n-i-j}{k-i-j}\cdot \binom{n}{k} - \binom{n-i}{k-i}\cdot \binom{n-j}{k-j}\\
    &\equiv  \binom{n}{i}\cdot \binom{k-j}{i} - \binom{n-j}{i}\cdot \binom{k}{i}\\
    &\leq 0,
\end{align*}
where the last inequality follows since \[\frac{\binom{n}{i}\cdot\binom{k-j}{i}}{\binom{n-j}{i}\cdot\binom{k}{i}}=\prod_{s=0}^{i-1} \frac{(n-s)(k-j-s)}{(n-j-s)(k-s)} \le 1\] for $k \leq n$.
Note also that for any $W, T\subseteq [n]$, we have $\on{Cov}[\vec \sigma^W, \vec \sigma^T] \leq \mb{E}[\vec \sigma^W \vec \sigma^T] \leq 1$. 

Recall that the coefficients $\wh{Q}(S)$ are non-negative of size at most $O_d(q)$. Moreover, $\wh{Q}(S) = 0$ if $|S| > d$.  Putting everything together, we have
\begin{align*}
    \on{Var}[Q(\vec{\sigma})] &= \sum_{W,T}\wh{Q}(W)\wh{Q}(T)\on{Cov}[\vec \sigma^W, \vec \sigma^T] \leq \sum_{\substack{W, T \subseteq [n]\\ W\cap T \neq \emptyset}} \wh{Q}(W)\wh{Q}(T)\on{Cov}[\vec \sigma^W, \vec \sigma^T]\\
    &\lesssim_{d} \sum_{|W| \leq d} \sum_{\substack{|T|\leq d \\ T\cap W \neq \emptyset}} q^2  \lesssim_{d} n^{d}\cdot n^{d-1}\cdot q^{2}. \qedhere
\end{align*}  
\end{proof}

\section{Completing the proof of the hypergraph edge-statistics conjecture}\label{sec:completing}
Now, we combine all the ingredients collected so far to prove \cref{conj:1/e}. Recall that the notation $\alpha \ll \beta_1,\dots,\beta_q$ (respectively, $\alpha \gg \beta_1,\dots,\beta_q$) means ``$\alpha$ is sufficiently small in terms of $\beta_1,\dots,\beta_q$'' (respectively, ``$\alpha$ is sufficiently large in terms of $\beta_1,\dots,\beta_q$'').
\begin{proof}[Proof of \cref{conj:1/e}]
Recall that we are to prove that if $k\gg r,\eps$ and $\ell\notin\{0,\binom{k}{r}\}$, then $\operatorname{ind}_{r}(k,\ell)\le 1/e+\varepsilon$. 
    Since $\on{ind}_r(k,\ell) = \on{ind}_r(k, \binom{k}{r}-\ell)$, it suffices to assume that  $\ell \leq \lceil \binom{k}{r}/2 \rceil$. Further, it suffices to prove the statement only for (say) $\ell < k^{-1/2}\binom{k}{r}$, since in the complementary regime $k^{-1/2}\binom{k}{r}\leq \ell \leq \lceil\binom{k}{r}/2\rceil$, a stronger statement follows from \cref{conj:dense}.    

    So, consider integers $k,\ell$ satisfying $0<\ell<k^{-1/2}\binom kr$ and fix any $\eps>0$. 
    The dependence of $k$ on $r,\varepsilon$ will be moderated by additional parameters $R, m, q, \delta$, which will play a role later in the argument. Specifically, we first need $R \gg \eps$, then $m \gg R$, then $q \gg m,r$ and then $\delta \ll q$. Finally, $k \gg m,q,r,R,\delta,\varepsilon$. To summarise, the relative sizes of various parameters should be thought of as
    \[k\gg1/\delta\gg q\gg m\gg R\gg 1/\varepsilon.\]
    
    Let $n=Rk$, let $G$ be an $r$-uniform hypergraph on the vertex set $[n]$, and let $U$ be a random subset of $k$ vertices of $G$. As $N_r(n, k, \ell)/\binom nk$ is nonincreasing in $n$, it suffices to prove that
\[\Pr[e(G[U])=\ell]\le 1/e+\varepsilon.\]

Let $Y$ be the set obtained by applying \cref{lem:vertex-cover} to $G$ (with our value of $m$).

\medskip
\paragraph{\bf Case I: $Y\neq \emptyset$} Consider the random variable
$\mb E\big[e(G[U])\,\big|\,Y\cap U\big]$. This can be interpreted as a multilinear polynomial evaluated at $\vec \sigma\sim\on{Slice}(n,k)$ (where $\sigma_i = \mbm{1}_{i \in U}$), that only depends on $|Y|=O_{m,r}(1)$ of its variables. Indeed, we have 
\begin{align*}
\mb E\big[e(G[U])\,\big|\,Y\cap U\big]=
\sum_{W \in E(G)}\Pr\big[W \in E(G[U])\,\big|\, Y \cap U\big] &= \sum_{W \in E(G)}\Pr[W\setminus Y \subseteq U]\,\mbm{1}_{W\cap Y \subseteq U \cap Y}\\
&= \sum_{W\in E(G)}\Pr[W\setminus Y\subseteq U]\,\vec \sigma^{W\cap Y}
\end{align*}
where $E(G)$ denotes the set of edges of $G$ (cf.\ the expression for $e(G[U])$ at the start of \cref{sec:slice-coupling}). Note that this polynomial has nonnegative coefficients. Also, since we are assuming $Y\neq \emptyset$, every edge of $G$ intersects $Y$, so the constant coefficient of this polynomial is zero. 
So, by \cref{thm:Poisson-anticoncentration,thm:slice-vs-product} (with $\yvar=|Y|$ and $\gamma=\varepsilon/2$ and $t=3^{-|Y|}\ell$), since $R,k \gg \eps$, it follows that except with probability $1/e+\varepsilon/2$, 
\begin{equation}
    \Big|\mb E[e(G[U])\,|\,Y\cap U]-\ell\Big|> 3^{-|Y|}\ell\gtrsim_{r,m}\ell.\label{eq:exclusion}
\end{equation}

Condition on any outcome of $Y\cap U$ such that \cref{eq:exclusion} holds. The remaining randomness is comprised of the random set $U\setminus Y$ (which is a uniformly random subset of $[n]\setminus Y$ of size $k-|Y\cap U|$).

Recall from \cref{def:G-YX} that $G_{Y}(X)=\{e\backslash X:e\in G-(Y\backslash X),\;e\backslash X\ne\emptyset\}$. If $G_Y(Y\cap U)=\emptyset$, then we are done: given our conditioning, $e(G[U])$ would then take some value with probability 1, and this value cannot be equal to $\ell$ since we are working with an outcome of $Y\cap U$ for which \cref{eq:exclusion} holds. Therefore, we can assume that $G_Y(Y\cap U)$ is non-empty. In this case, given our conditioning, we can write $e(G[U])=\lambda(\vec \sigma)$, where $\vec \sigma \sim \on{Slice}(n - |Y|, k - |Y\cap U|)$ and $\lambda$ is a multilinear polynomial of some degree $d\in[r-1]$. Note that the coefficients of $\lambda$ all lie in the set $\{0,1,\dots,q\}$ (here we are using that $q\gg m,r$, so $q\ge\binom{|Y|}r$). Also, by the definition of $Y$, we have $\nu(H^{d}_0(\lambda))=\nu(G_Y(Y\cap U))\geq m$.
Our objective is to show that $\Pr[\lambda(\vec \sigma)=\ell]\le \varepsilon/2$.

Recall that $\delta \ll r,q$ and $m \gg R \gg \eps$ and $k\gg r,q,m$. By
\cref{lem:sparse-LO}, at least one of the following holds:
\begin{enumerate}
\item $\lambda$ has at least $\delta n^{d}$ nonzero degree-$d$ coefficients, or
\item we have
\[
\sup_{\ell\in\mb R}\Pr[\lambda(\vec{\sigma})=\ell]\lesssim_{d,q} \frac1{m^{1/3}}\le \varepsilon/2.
\]
\end{enumerate}
In case (2) we are done. In case (1), we have $\mb E[\lambda(\vec \sigma)]\gtrsim \delta k^d\gtrsim_{\delta} k^d$ while $\on{Var}[\lambda(\vec \sigma)]\lesssim_{d,q} n^{2d-1}\lesssim_{d,q, R} k^{2d-1}$ by \cref{lem:cheap-variance}. So, by Chebyshev's inequality, except with probability at most $\varepsilon/2$ we have
\[\Big|\lambda(\vec \sigma)-\mb E[\lambda(\vec \sigma)]\Big|\lesssim_{d,q,R,\delta,\varepsilon} k^{-1/2}\mb E[\lambda(\vec \sigma)].\]
Recalling from \cref{eq:exclusion} that $\mb E[\lambda(\vec \sigma)]$ is excluded from a range of the form $(1\pm \Omega_{r,m}(1))\ell$, the desired result follows for $k \gg m,q,r,R,\delta,\varepsilon$. 

\medskip
\paragraph{\bf Case II: $Y = \emptyset$} If $G$ is empty, then we are done, since $\ell \neq 0$ by assumption. Otherwise, we can write $e(G[U])=\lambda_G(\vec \sigma)$, where $\lambda_G$ has degree $r$ and $\vec{\sigma} \sim \on{Slice}(n,k)$. In this case we will be able to prove the stronger bound
\[\Pr[e(G[U])=\ell]=\Pr[\lambda_G(\vec \sigma)=\ell]\le \varepsilon/2\le 1/e+\varepsilon.\]
As in Case I above, we have $\nu(H^{(r)}_0(\lambda)) \geq m$, and similarly arguing via \cref{lem:sparse-LO}, we only need to consider the case that $G$ has at least $\delta n^{r}$ nonzero degree-$r$ coefficients. By the same argument as above, except with probability at most $\eps/2$ we have
\[\Big|\lambda_G(\vec \sigma)-\mb E[\lambda_G(\vec \sigma)]\Big|\lesssim_{d,q,R,\delta,\varepsilon} k^{-1/2}\mb E[G(\vec \sigma)].\]
The desired conclusion now follows for $k \gg q,r,R,\delta,\varepsilon$, since $\mb{E}[\lambda_G(\vec{\sigma})] \gtrsim_{R,\delta}k^{r}$, whereas by assumption $\ell < k^{-1/2}\binom{k}{r} \le k^{r-1/2}$. 
\end{proof}

\bibliographystyle{amsplain_initials_nobysame_nomr}
\bibliography{main}

\end{document}